\documentclass{amsart}

\usepackage{latexsym}
\usepackage{verbatim}
\usepackage{amsmath}
\usepackage{amssymb}
\usepackage{mathrsfs}
\usepackage{amsthm}
\usepackage{tikz}

\usetikzlibrary{decorations.pathreplacing}
\usetikzlibrary{arrows.meta}

\newcommand{\Ind}{
 \setbox0=\hbox{$x$}\kern\wd0\hbox to 0pt{\hss$
 \mid$\hss}\lower.9\ht0\hbox to 0pt{\hss$\smile$\hss}\kern\wd0
}
\newcommand{\indep}[3]{
 #1\mathop{\mathpalette\Ind{}}_{#2}#3
}

\newcommand{\Notind}{
 \setbox0=\hbox{$x$}\kern\wd0\hbox to 0pt{\mathchardef
 \nn=12854\hss$\nn$\kern1.4\wd0\hss}\hbox to 0pt{\hss$\mid$\hss}\lower.9\ht0
 \hbox to 0pt{\hss$\smile$\hss}\kern\wd0
}

\usepackage{amsthm, amsfonts}
\usepackage{euscript}
\usepackage{graphicx}
\usepackage{float}
\usepackage{caption}
\usepackage{tikz-qtree}
\usepackage{subcaption}


\newcommand{\G}{\Gamma}
\renewcommand{\d}{\delta}
\renewcommand{\a}{\alpha}

\renewcommand{\H}{\mathbb{H}}
\newcommand{\M}{\mathbb{M}}
\newcommand{\inv}{^{-1}}
\newcommand{\C}{\mathcal{C}}
\newcommand{\cl}{\mathrm{cl}}
\newcommand{\Q}{\mathbb{Q}}
\newcommand{\diam}{\mathrm{diam}}

\newtheorem{defi}{Definition}[section]
\newtheorem{theorem}[defi]{Theorem}
\newtheorem{definition}[defi]{Definition}
\newtheorem{lemma}[defi]{Lemma}
\newtheorem{proposition}[defi]{Proposition}

\newtheorem{corollary}[defi]{Corollary}

\newtheorem{remark}[defi]{Remark}
\newtheorem{example}[defi]{Example}
\newtheorem{examples}[defi]{Examples}

\def\Aut{\mathop{\rm Aut}\nolimits}
\def\Isom{\mathop{\rm Isom}\nolimits}

\def\nil2{\mathop{\rm nil2}\nolimits}

\begin{document}
\author{Katrin Tent}
\title{On universal-homogeneous hyperbolic graphs and spaces and their isometry groups}
\begin{abstract}
The Urysohn space is the unique separable metric space that is universal and homogeneous for finite metric spaces, i.e., it embeds any finite metric space  any isometry between finite subspaces extends to an isometry of the whole space. We here consider the existence of a universal-homogeneous hyperbolic space. We show that 
 that for $\delta>0$ there is no $\delta$-hyperbolic space which is universal and homogeneous in the above sense for all finite $\delta$-hpyerbolic spaces. 
 
We then show that for any $\delta\geq 0$ and any countable class $\C$ of $\d$-hyperbolic spaces with countably many distinguished   $\d$-closed subspaces there exists a $\d$-hyperbolic metric space $\H_\C=\H(\C,\delta)$ such that every $X\in \C$ can be embedded into $\H_\C$ as a   $\d$-closed subspace and any isometry between distinguished closed subspaces extends to an isometry of $\H_\C$.

If $\C$ consists of $\d$-hyperbolic geodesic spaces, then $\H_\C$ contains the \emph{quasi-tree of spaces} as defined in \cite{Bestvina}.

For $\C_\d$ the class of all finite $\d$-hyperbolic spaces with rational distances or the class  of finite $\d$-hyperbolic graphs, the limit $\H_\d$ is a $\d$-hyperbolic space (or graph, respectively) universal for all finite $\d$-hyperbolic spaces with rational distances (or finite $\d$-hyperbolic graphs) and such that any isometry between   $\d$-closed subspaces extends to an isometry of $\H_\d$.

We show that the isometry group of $\H_\d$ does not contain elements of bounded displacement and  has no dense conjugacy class.
\end{abstract}

\maketitle
\section{Introduction and non-existence}

The Urysohn space is the unique separable metric space that is universal and homogeneous for finite metric spaces, i.e., it embeds any finite metric space  and any isometry between finite subspaces extends to an isometry of the whole space. The Urysohn space can easily be constructed by amalgamating all finite metric spaces with rational distances and taking the completion of the resulting metric space, see e.g. \cite{TZ}. As is often the case with very homogeneous structures, the isometry group of the Urysohn space has a natural maximal normal subgroup, namely the group of isometries of bounded displacement and the quotient group is boundedly simple. In this note we consider the question whether there is an analog for hyperbolic metric spaces.
This note is motivated by the non-existence result in Theorem~\ref{thm: no universal-homogeneous dh}.

We recall some terminology:
Let $\mathcal{C}$ be a class of structures. Then we say that $\mathcal{C}$ has the

\begin{enumerate}
\item (AP) Amalgamation Property if for any $A, B_1, B_2\in\mathcal{C}$ and embeddings $i_j:A\longrightarrow B_j, j=1,2,$ there is some $D\in\mathcal{C}$ and embeddings $k_j: B_j\longrightarrow D$ such that the diagram commutes, i.e. $i_1\circ k_1=i_2\circ k_2$;
\item (JEP) Joint Embedding Property if any for any $B_1, B_2\in\mathcal{C}$ there is some $D\in\mathcal{C}$ such that $B_2, B_2$ embed into $D$;
\item (HP) Hereditary Property if $\mathcal{C}$ is closed under substructures;

\end{enumerate}

If $\mathcal{C}$ is a countable class of finitely generated structures with (HP), (AP) and (JEP),  there is a unique limit structure $\mathbb{M}$, called the \emph{Fra\"iss\'e limit} of $\mathcal{C}$ (see e.g. \cite{TZmodel theory}, Theorem 4.4.4) characterized by the following  properties:
\begin{enumerate}
\item (universal) $\mathbb{M}$ is universal for $\mathcal{C}$, i.e. every $A\in\mathcal{C}$ embeds into $\mathbb{M}$, and
\item (homogeneous) $\mathbb{B}$ is homogeneous for $\mathcal{C}$, i.e. any isomorphism between substructures $A, B\subset \mathbb{M}$ with $A, B\in\mathcal{C}$ extends to an isomorphism of $\mathbb{M}$.
\item (age) every finitely generated substructure of $\mathbb{M}$ belongs to $\mathcal{C}$.
\end{enumerate}

We record the following well-known observation:
\begin{lemma}\label{lem:homogenous implies amalgamation}
Let $\mathbb{M}$ be a structure which is universal and homogeneous for $\mathcal{C}$ and such that every finitely generated substructure of $\mathbb{M}$ is contained in a structure belonging to $\mathcal{C}$. Then $\mathcal{C}$ has (AP).
\end{lemma}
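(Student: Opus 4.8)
The plan is to run the standard Fra\"iss\'e-style ``amalgamate inside the limit'' argument: use universality to place $B_1$ and $B_2$ inside $\mathbb{M}$, use homogeneity to glue the two copies of $A$ onto one another, and use the closure hypothesis to push the resulting substructure back into $\mathcal{C}$.

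First I would fix $A,B_1,B_2\in\mathcal{C}$ together with embeddings $i_1:A\longrightarrow B_1$ and $i_2:A\longrightarrow B_2$. By universality there are embeddings $f_1:B_1\longrightarrow\mathbb{M}$ and $f_2:B_2\longrightarrow\mathbb{M}$. The composites $f_1\circ i_1$ and $f_2\circ i_2$ are then two embeddings of $A$ into $\mathbb{M}$, with images the substructures $A_1=f_1(i_1(A))$ and $A_2=f_2(i_2(A))$; since $\mathcal{C}$ is closed under isomorphism and $A\in\mathcal{C}$, both $A_1$ and $A_2$ lie in $\mathcal{C}$.

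Next I would align the two copies. The map $\varphi=(f_2\circ i_2)\circ(f_1\circ i_1)^{-1}$ is an isomorphism from the substructure $A_1$ onto the substructure $A_2$, both in $\mathcal{C}$, so by homogeneity it extends to an automorphism $\Phi$ of $\mathbb{M}$. Put $g_1=\Phi\circ f_1$ and $g_2=f_2$. Restricting to $i_1(A)$ and using that $\Phi$ agrees with $\varphi$ on $A_1$, a direct check gives $g_1\circ i_1=\Phi\circ f_1\circ i_1=\varphi\circ(f_1\circ i_1)=f_2\circ i_2=g_2\circ i_2$, so the two embeddings of $B_1$ and $B_2$ into $\mathbb{M}$ now agree on all of $A$.

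Finally I would invoke the hypothesis on $\mathbb{M}$. The images $g_1(B_1)$ and $g_2(B_2)$ together generate a finitely generated substructure $D_0$ of $\mathbb{M}$, because $B_1$ and $B_2$ are finitely generated; by assumption $D_0$ is contained in some $D\in\mathcal{C}$. Then $k_1=g_1:B_1\longrightarrow D$ and $k_2=g_2:B_2\longrightarrow D$ are embeddings with $k_1\circ i_1=k_2\circ i_2$, which is exactly the commuting amalgam required by (AP). The only genuinely essential step is the alignment: without homogeneity there is no reason for the two independently chosen copies of $A$ inside $\mathbb{M}$ to coincide, and this is where the argument could fail for a merely universal structure. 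The last hypothesis on $\mathbb{M}$ is used solely to ensure that the finitely generated substructure $D_0$ spanned by the two images embeds into a member of $\mathcal{C}$, rather than merely living in $\mathbb{M}$.
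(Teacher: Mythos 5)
Your proof is correct and follows essentially the same route as the paper's: embed $B_1,B_2$ into $\mathbb{M}$ by universality, extend the induced isomorphism between the two copies of $A$ to an automorphism by homogeneity, and use the hypothesis on finitely generated substructures to place the resulting amalgam inside a member of $\mathcal{C}$. Your write-up is simply a more explicit version of the paper's argument (and correctly states the commuting condition as $k_1\circ i_1=k_2\circ i_2$, fixing a harmless typo in the paper's statement of (AP)).
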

\begin{proof}
Let $A, B_1, B_2\in\mathcal{C}$ and consider embeddings $i_j:A\longrightarrow B_j, j=1,2,$. Then by universality, $B_1$ and $B_2$ embed into $\mathbb{M}$ and the composition $i_2\circ i_1\inv$ to $A$ induces an isomorphism between the substructures of $B_1$ and $B_2$ isomorphic to $A$. By homogeneity, there is an isomorphism $\alpha$ of $\mathbb{M}$ taking the copy of $A$ inside $B_1$ to the copy of $A$ inside $B_2$. Then any substructure of $\mathbb{M}$ containing $\alpha(B_1)\cup B_2$ is an amalgam as required.
\end{proof}

We will use the following definition of $\delta$-hyperbolicity (see e.g. \cite{DK} Sec. 11.3):

\begin{definition}\label{def:hyperbolic}
Let $(X,d)$ be a metric space, $\d\geq 0$. Then $X$ is $\d$-hyperbolic if and only if for all $x_1, x_2, x_3, x_4\in X$  the following holds: Put
\begin{align*}
E=&d(x_1, x_2)+d(x_3,x_4),\\
 F=&d(x_1, x_3)+d(x_2,x_4),\\ 
 G=&d(x_1, x_4)+d(x_2,x_3).
\end{align*}
\noindent
Then $G\leq \max\{E, F\}+2\d$, or, equivalently, if $E\leq F\leq G$, then $G-F\leq 2\d$.
\end{definition}

Clearly this is a hereditary condition, i.e. if $X$ is $\d$-hyperbolic, then so is any subspace.
It is not too hard to see that for $\d=0$, the class of $0$-hyperbolic (or ultrametric) spaces has (AP), and hence universal-homogeneous ultrametric spaces exist, see e.g. \cite{Ishiki1, Ishiki2, Lemin1, Lemin2, Wan}.
However,  for $\d>0$ there is no universal-homogeneous  $\d$-hyperbolic space:

\begin{theorem}\label{thm: no universal-homogeneous dh}
For $\d>0$ there does not exist a  $\d$-hyperbolic space which is universal and homogeneous for finite $\d$-hyperbolic spaces. 
\end{theorem}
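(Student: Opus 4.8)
The plan is to argue by contradiction through the amalgamation property, using Lemma~\ref{lem:homogenous implies amalgamation}. Suppose for some $\delta>0$ there were a $\delta$-hyperbolic space $\mathbb{M}$ that is universal and homogeneous for the class $\mathcal{C}_\delta$ of all finite $\delta$-hyperbolic spaces. The four-point condition of Definition~\ref{def:hyperbolic} is hereditary, so every finite subspace of $\mathbb{M}$ is again a finite $\delta$-hyperbolic space and hence already lies in $\mathcal{C}_\delta$; in particular the hypothesis of Lemma~\ref{lem:homogenous implies amalgamation} is satisfied, and that lemma forces $\mathcal{C}_\delta$ to have the amalgamation property. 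Thus it suffices to \emph{refute} (AP): to produce finite $\delta$-hyperbolic spaces $A,B_1,B_2$ together with isometric embeddings $A\hookrightarrow B_1$ and $A\hookrightarrow B_2$ that admit no common $\delta$-hyperbolic amalgam. This is where all the geometric content sits, and the reduction above is the only use I will make of homogeneity.

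To build the obstruction I would analyse exactly which new distances a hypothetical amalgam $D\supseteq B_1\cup B_2$ is allowed to assign. Write $x$ for a point of $B_1\setminus A$ and $y$ for a point of $B_2\setminus A$; the new datum is $t=d(x,y)$, and every constraint on $t$ comes either from the triangle inequality through a point of $A$ or from the four-point condition applied to a quadruple $\{a,a',x,y\}$ with $a,a'\in A$. A short computation with $E,F,G$ shows that such a quadruple behaves in two ways: when the two crossed sums $d(x,a)+d(y,a')$ and $d(x,a')+d(y,a)$ differ by at most $2\delta$ it only bounds $t$ from above, but when they differ by more than $2\delta$ it confines $t$ to an interval of radius $2\delta$ about a definite centre determined by $A,B_1,B_2$. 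The strategy is then to choose $A$ that is \emph{not} $\delta$-closed and two extensions that fill a missing point of its $\delta$-closure in incompatible ways, arranging two such quadruples whose centres lie more than $4\delta$ apart: their admissible intervals are then disjoint, no value of $t$ is legal, and no $\delta$-hyperbolic amalgam can exist.

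The main obstacle is precisely making the discrepancy exceed $4\delta$. Indeed, adding a single point on each side over a small base never fails: the upper bound coming from one quadruple and the lower bound coming from another differ by at most the combined $\delta$-slack of $B_1$ and $B_2$, which is exactly enough to leave an admissible $t$ (for a two-point base one checks directly that an amalgam always exists). To overcome this one must amplify the mismatch beyond the fixed budget $4\delta$, and I see two routes: either add several points on each side so that the new distances $d(x_i,y_j)$ are coupled to one another through four-point conditions among the added points themselves, not merely through $A$, so that a jointly infeasible system on the new distances results; or take $A$ large and genuinely far from a tree and realise the two extensions at a large scale, so that the accumulated disagreement between them outgrows the constant $4\delta$. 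In either case the remaining work is the routine but careful verification that $B_1$ and $B_2$ are themselves $\delta$-hyperbolic while the completed quadruples in any amalgam are forced to violate the inequality $G\le\max\{E,F\}+2\delta$; once such a configuration is exhibited, Lemma~\ref{lem:homogenous implies amalgamation} delivers the contradiction and proves the theorem.
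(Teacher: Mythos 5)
Your reduction is the same as the paper's: by Lemma~\ref{lem:homogenous implies amalgamation} it suffices to refute (AP) for finite $\delta$-hyperbolic spaces, and homogeneity is used only there. Past that point, however, your text is a search strategy rather than a proof: the configuration $A,B_1,B_2$ carrying all of the geometric content is never exhibited, and you explicitly defer the step (``the routine but careful verification'') that \emph{is} the theorem. Worse, the heuristic you use to steer the search is false and would steer you away from the solution. You claim that adding a single point on each side over a small base never fails, and that one must therefore either couple several new points per side or work over a large base $A$. The paper's counterexample is precisely what you exclude: $A=\{c,d,e\}$ is a tripod (legs $x_0,x_1,z$ from the branch point to $c,d,e$), $B_1$ adds one point $a$ behind $e$ at distance $z_0$ with tree-like distances except $d(a,c)=z_0+z+x_0-2\delta$, and $B_2$ adds one point $b$ behind $e$ at distance $z_1$ with tree-like distances except $d(b,d)=z_1+z+x_1-2\delta$; with all of $x_0,x_1,z_0,z_1\geq\delta$ and $z>2\delta$, each $B_i$ is $\delta$-hyperbolic, since its single quadruple has its two largest pair-sums differing by exactly $2\delta$.

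The reason your ``budget'' argument breaks is that you only weighed two quadruple-intervals against each other and dropped the triangle-inequality caps that you yourself listed as the other source of constraints. In any amalgam, for the quadruple $(a,b,c,d)$ the two sums not involving $t=d(a,b)$ are
\[
G=d(a,d)+d(b,c)=z_0+z_1+2z+x_0+x_1,\qquad F=d(a,c)+d(b,d)=G-4\delta,
\]
so the $2\delta$-slacks of $B_1$ and of $B_2$ accumulate inside a \emph{single} four-point condition: since $G-F=4\delta>2\delta$, Definition~\ref{def:hyperbolic} forces $E=t+x_0+x_1\geq G-2\delta$, i.e.\ $t$ is confined to an interval of radius $2\delta$ centred at $z_0+z_1+2z$ (your ``definite centre''). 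But the triangle inequality through $e$ gives $t\leq z_0+z_1$, which lies $2z-2\delta>2\delta$ below that interval, so no admissible $t$ exists. There is no combined-slack protection here, because the conflicting constraint is not a second quadruple-interval but a cap that stays fixed at $z_0+z_1$ while the centre moves off to infinity with $z$. So your instinct to take $A$ not $\delta$-closed is sound (this $A$ is indeed not gated in $B_1$ or $B_2$), but your claim that one-point extensions cannot break (AP) is false, and the multi-point or large-scale constructions you propose in its place are both unnecessary and never carried out; the one-point-per-side, three-point-base example already finishes the proof.
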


\begin{proof}
By Lemma~\ref{lem:homogenous implies amalgamation} it suffices to show that finite $\d$-hyperbolic spaces with rational distances do not have (AP).

Let $A= \{c,d,e\}$ and $B_1=A\cup\{a\}, B_2=A\cup\{b\}, x_0,x_1,z_0,z_1, z\in\mathbb{R}_{>0}$. 
\begin{center}
\includegraphics[scale=1]{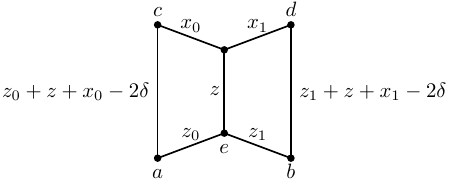}
\end{center}

Suppose that
\begin{align*}
d(c,d) & = x_0 +x_1\\
d(e,c) &= z + x_0\\
d(e,d) &= z + x_1\\
d(a,e) &= z_0,\\
d(a,d) &= z_0 + z + x_1\\
d(a,c) &= z_0 + z + x_0 - 2\delta\\
d(b,e) &= z_1,\\
d(b,d) &= z_1 + z + x_1 - 2\delta\\
d(b,c) &= z_1 + z + x_0.\\
\end{align*}

Then  $\{a,c,d,e\}$ and $ \{b,c,d,e\}$ are $\delta$-hyperbolic.

Clearly $d(a,b) \leq z_0 + z_1.$  If $ z > 2\delta$ and $z_0\neq z_1$, then
\[d(a,d) + d(b,c) = z_0 + z + x_1 + z_1 + z + x_0 > d(a,b) + d(c,d) + 2\delta\]

and similarly
\begin{align*}
d(a,d) + d(b,c) & =  z_0 + z + x_1 + z_1 + z + x_0\\
                & > d(a,c) + d(b,d) + 2\delta\\
               & = z_0 + z + x_0 - 2\delta + z_1 + z + x_1 -2\delta + 2\delta\\
                &= z_0 + z + x_0 + z_1 + z + x_1 +2\delta\\
\end{align*}
So there is no $r\in\mathbb{R}$ such that defining $d(a,b)=r$ would make ${a,b,c,d,e}$ hyperbolic.
\end{proof}

\section{Generalized Fra\"iss\'e limits} \label{sec:Hrushovski limit}

The previous result shows that we cannot construct a universal-homogeneous hyperbolic space.
In order to obtain a universal $\d$-hyperbolic metric space with a weaker homogeneity condition, we modify the amalgamation by restricting to \emph{strong embeddings} to obtain the \emph{Hrushovski limit}, a weaker analog of the Fra\"iss\'e limit.

\medskip\noindent
{\bf{Hrushovski limits:}}  (see e.g. \cite{Hr})  Let $(\mathcal{C},\leq)$ be a countable class of finite structures with a strong embedding relation $\leq$ between structures, defining a well-founded partial order on $\mathcal{C}$. We say that $(\C,\leq)$ is a Hrushovski class if (AP), (JEP) and (HP) hold with respect to strong embeddings, i.e.
\begin{itemize}
\item (AP) for all $A, B_1, B_2\in\mathcal{C}$ with strong embeddings $i_j:A\longrightarrow B_i, i=1,2$, there exists some $D\in\mathcal{C}$ and strong embeddings $k_j:B_j\longrightarrow D$ such that the diagram commutes;
\item (JEP) for all $B_1, B_2\in\mathcal{C}$ there exists some $D\in\mathcal{C}$ and strong embeddings $k_j:B_j\longrightarrow D, j=1,2$;
\item (HP) $\C$ is closed under strongly embedded substructures.
\end{itemize}
 In this case there is a limit structure $\mathbb{M}$, the Hrushovski limit for  $(\mathcal{C},\leq)$. For $A\in\mathcal{C}$ we define $A\leq \mathbb{M}$ if $A\leq C$ for any $C\in\mathcal{C}$ contained in $\mathbb{M}$ with $A\subseteq C$. The Hrushvoski limit is universal for $\mathcal{C}$, i.e. every $A\in\mathcal{C}$ strongly embeds into $\mathbb{M}$ and $\mathbb{M}$ is $\leq$-homogeneous, i.e. any isomorphism between strongly embedded substructures extends to an isomorphism of $\mathbb{M}$.

We extend these amalgamation constructions to a more general setting:

\begin{definition}\label{def:marked structure}
A \emph{marked structure} is a structure $A$ with an (at most) countable class of distinguished substructures\footnote{There is no assumption on the class of distinguished substructures or on the distinguished substructures themselves to be first-order definable. } closed under intersections.

An \emph{isomorphism} between marked structures $A, B$ is given by an isomorphism between the structures $A$ and $B$ inducing isomorphisms on all distinguished substructures (and, in particular, a bijection between the sets of distinguished substructures).

An \emph{embedding} of a marked structure $A$ into a marked structure $B$ is given by an embedding $f$ of $A$ into $B$  such that $f(A)$ is a distinguished substructure of $B$ and $f$ induces an isomorphism of the marked structures $A$ and $f(A)$, where $f(A)$ is the marked structure obtained from $B$ by restricting the distinguished substructures of~$B$.

\end{definition}

\begin{definition}\label{def:marked Fraisse class}
We say that a countable class $\mathcal{C}$ of marked structures is a marked Fra\"iss\'e class if $\mathcal{C}$ satisfies the following conditions:

\begin{enumerate}
\item (HP) $\mathcal{C}$ is closed under distinguished substructures, i.e. if $A\in\mathcal{C}$ is a marked structure and $B$ is a distinguished substructure in $A$, then $B\in\mathcal{C}$.
\item (JEP) $\mathcal{C}$ is closed under joint embeddings, i.e. for $A, B\in\mathcal{C}$ there is some $D\in\mathcal{C}$ such that $A, B$ embed into $D$ as marked structures.
\item (AP) $\mathcal{C}$ is closed under amalgamation, i.e. if $A, B,C\in\mathcal{C}$ with embeddings of $A$ into $B$ and $C$, there is some $D\in\mathcal{C}$ and embeddings of $B, C$ into $D$ such that the corresponding diagram commutes.

\end{enumerate}

\end{definition}

The following  examples of marked Fra\"iss\'e classes show that this notion generalizes both Fra\"iss\'e classes and Hrushovski classes:
\begin{examples}\label{examples marked}

\begin{enumerate}
\item If $\mathcal{C}$ is a Fra\"iss\'e class of finitely generated structures over a countable language, then $\C$ is a marked Fra\"iss\'e class by considering all finitely generated substructures of elements in $\C$ as distinguished substructures.

\item If $(\C,\leq)$ is a Hrushovski class, then $\C$ is a marked Fra\"iss\'e class by considering all strongly embedded substructures of elements in $\C$ as distinguished substructures.

\item Let $T$ be the infinite tree with countably infinite valencies considered as a geodesic metric space with edges of length $1$. Then the class $\C$ of all subtrees of $T$ of finite diameter with distinguished substructures given by subtrees  is a marked Fra\"iss\'e class.
\end{enumerate}

\end{examples}

The construction of Fra\"iss\'e limits given e.g. in the proof of \cite{TZmodel theory} Thm. 4.4.4 immediately generalizes to this context and yields the following result:

\begin{theorem}\label{thm:generalized Fraisse}
Let $\C$ be a marked Fra\"iss\'e class.
Then there exists a unique marked structure $\mathbb{M}$  with the following properties:
\begin{enumerate}
\item (age) every distinguished substructure of $\mathbb{M}$ belongs to $\C$;
\item (cofinal) $\M$ is the union of a countable chain of distinguished substructures;
\item (universal) every $A\in\C$ embeds into $\mathbb{M}$; and
\item (homogeneous) if $f$ is an isomorphism of distinguished substructures $A, B$ of $\mathbb{M}$, then there is an automorphism of $\mathbb{M}$ extending $f$.
\end{enumerate}

\end{theorem}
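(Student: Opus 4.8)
The plan is to run the classical Fra\"iss\'e chain construction (as in the proof of \cite{TZmodel theory}, Thm.~4.4.4), replacing throughout ``substructure'' by ``distinguished substructure'' and ``embedding'' by ``embedding of marked structures'' in the sense of Definition~\ref{def:marked structure}. The one genuinely new ingredient is to equip the limit $\M$ with a coherent marking, and for this I would first record a transitivity fact: if $f\colon A\to B$ is an embedding of marked structures and $D$ is distinguished in $A$, then $f(D)$ is distinguished in $B$. This is immediate from the requirement that $f$ induce an isomorphism of $A$ onto the marked structure $f(A)$ whose marking is inherited from $B$; in particular a distinguished substructure of a distinguished substructure is again distinguished.

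\textbf{Existence.} Using that $\C$ is countable and that each member carries only countably many distinguished substructures, I would enumerate all \emph{amalgamation requests}, each of the form ``a marked embedding $g\colon A\to B$ with $A,B\in\C$, together with a specified distinguished copy of $A$ inside the structure built so far.'' I then build an increasing chain $M_0\leq M_1\leq\cdots$ of members of $\C$, each $M_s$ a distinguished substructure of $M_{s+1}$. At stage $s$ I process the $s$-th request: given the distinguished copy $A\leq M_s$ and the embedding $g\colon A\to B$, I apply (AP) to the span $M_s\hookleftarrow A\hookrightarrow B$ to obtain $M_{s+1}\in\C$ with distinguished embeddings of both $M_s$ and $B$, while interleaved (JEP)-steps guarantee that every $A\in\C$ is eventually embedded. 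Setting $\M:=\bigcup_s M_s$, I declare a substructure $D\subseteq\M$ to be \emph{distinguished} exactly when $D$ is distinguished in some $M_s$. By the transitivity fact this is independent of the choice of $s$; there are only countably many such $D$; and closure under intersection descends from closure in each $M_s$ (choosing $s$ large enough to contain both), so $\M$ is a bona fide marked structure.

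The four properties should then fall out. (age) holds because any distinguished $D\subseteq\M$ lies in some $M_s\in\C$ and $\C$ has (HP); (cofinal) is the construction itself, each $M_s$ being distinguished in $\M$ by transitivity. For the remaining two I would isolate the \emph{richness} property secured by the bookkeeping: whenever $A\leq B$ in $\C$ and $A$ is embedded as a distinguished substructure of $\M$, that embedding extends to a distinguished embedding of $B$ into $\M$. Universality follows from the interleaved (JEP)-steps, and (homogeneous) follows by the usual back-and-forth: writing $\M=\bigcup_n A_n$ along the cofinal chain, an isomorphism $f$ between distinguished substructures is extended alternately over each $A_n$ by a single application of richness on each side, and the union of these partial maps is the desired automorphism.

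\textbf{Uniqueness} is a second back-and-forth. Given two marked structures $\M,\M'$ enjoying all four properties, I write each as a union of a cofinal chain of distinguished substructures from $\C$; universality lets me embed the next piece of one into the other, and homogeneity lets me correct that embedding by an automorphism so that it extends the isomorphism built so far, and symmetrically, the union being an isomorphism $\M\to\M'$. I expect the only real obstacle to be the verification that the marking on $\M$ is coherent --- precisely the transitivity fact and closure under intersection --- together with the discipline of checking, at each amalgamation step, that $M_s\to M_{s+1}$ is a genuine marked embedding so that distinguished substructures of $M_s$ persist as distinguished substructures along the chain; once this is in place the classical argument transfers essentially verbatim.
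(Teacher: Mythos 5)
Your proposal takes exactly the route the paper does: the paper's entire proof is the remark that the construction of \cite{TZmodel theory}, Thm.~4.4.4, ``immediately generalizes'', and your chain construction, the transitivity fact used to put a coherent marking on the limit, and the two back-and-forth arguments are a faithful expansion of that remark. Your uniqueness argument, and your verification that the marking on $\M$ is well defined, intersection-closed and countable, are correct as written.

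There is, however, one step that fails at the stated level of generality, and it is precisely the step you pass over quickly: the claim that the amalgamation requests can be \emph{enumerated}. Members of a marked Fra\"iss\'e class need not be countable structures (the paper stresses this after Theorem~\ref{thm:generalized Fraisse} and uses it in Example~\ref{ex: dense set}), so the set of marked embeddings $g\colon A\to B$ between members of $\C$, and likewise the set of isomorphisms of $A$ onto a fixed distinguished copy $A'\subseteq M_s$, may be uncountable; your bookkeeping then cannot list all requests, whereas the richness property your back-and-forth invokes quantifies over \emph{all} marked embeddings of $A$ into $\M$. The gap can be bridged whenever the marked embeddings $A\to B$ fall into countably many orbits under post-composition by marked automorphisms of $B$ (if $k\colon B\to\M$ solves the request for $h$, then $k\circ\beta\inv$ solves it for $\beta\circ h$), and this countability holds in every application the paper makes (finite spaces with rational distances, finite graphs, $\Q$, the real line with countably many marked intervals). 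But it does not follow from Definition~\ref{def:marked Fraisse class}: take $B$ uncountable with all distances $1$ and a single marked point $p$, and $C=B\cup\{c\}$ with $d(c,g)=1$ for one $g\in B\setminus\{p\}$ and $d(c,x)=2$ otherwise; then $B\leq_\d C$, and the class these generate under canonical amalgams is a marked Fra\"iss\'e class, yet any limit $\M$ would contain a distinguished copy $B'$ of $B$ whose marked automorphisms are all permutations fixing the marked point, and homogeneity applied to these automorphisms would carry one distinguished copy of $C$ over $B'$ (which exists by universality plus homogeneity) to uncountably many distinct ones, since distinct gates of the extra point give distinct subsets. That contradicts the requirement that a marked structure have at most countably many distinguished substructures, so no limit exists for that class. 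In short: your proof, like the paper's one-line proof, implicitly needs a countability hypothesis on embeddings between members of $\C$; with that hypothesis added your argument goes through, and without it no argument can.
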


We will use this generalization of Fra\"iss\'e's theorem to amalgamate countable classes of more general (not necessarily countable) structures. None of the conditions in this process depend on a particular first-order language and so we generally deal with abstract metric spaces (with suitable additional properties) instead.

\section{Strong embeddings of $\d$-hyperbolic spaces}\label{sec:strong embeddings}

We define a property of subsets of metric spaces inspired by buildings:

\begin{definition}\label{def:gated}
Let $(X,d)$ be a metric space. We call a subset $A\subseteq X$ \emph{gated} in $X$ if for every $b\in X\setminus A$ there exists a \emph{gate} $g=g_A(b)\in A$  such that for all $a\in A$ we have $d(b,a)=d(b,g)+d(g,a)$. We call $g$ the gate for $b$ in $A$.
\end{definition}

\begin{remark}\label{rem:unique gate}
Note that if $A$ is gated in $X$, then for $b\in X\setminus A$ the gate $g_A(b)$ is uniquely determined: if $g$ and $g'$ are gates for $b$  in $A$, then $d(b, g)=d(b,g')+d(g',g)$ and $d(b,g')=d(b,g)+d(g,g')$ forcing $d(g,g')=0$. In particular, $g_A(b)$ is the unique element in $A$ with minimal distance to $b$.
\end{remark}

\begin{example}\label{ex:cycle}
Clearly, any convex set in a tree is gated. On the other hand, 
suppose that $X=(x_0, x_1,\ldots, x_n=x_0)$ is a cycle of length $n$ in a graph with the graph metric. Then the subset $X=\{x_1,\ldots, x_{n-1}\}$ is not gated in $X$ as $d(x_0,x_1)=d(x_0,x_{n-1})=1$, contradicting our previous remark.
\end{example}

For a metric space $(X,d)$ we call a tuple $(x_0,\ldots, x_n)$ \emph{geodetic} if $d(x_0,x_n)=d(x_0,x_1)+\ldots+d(x_{n-1}, x_n)$. The following is an easy, but useful consequence of the definition:
\begin{remark}\label{rem:geodetic}
 If $A$ is gated in $B$, then $A$ is \emph{convex} in $B$ in the following sense: if $(a_1,b,a_2)$ is geodetic with $a_1, a_2\in A, b\in B$, then $b\in A$.
\end{remark}

We next define a notion of strong embeddings for metric spaces that will be appropriate for classes of hyperbolic metric spaces to allow amalgamation:

\begin{definition}\label{def:closed}
Fix $\d\geq 0$ and let $X$ be a metric space, $A\subset X$. Then we say that $A$ is $\d$-\emph{closed} (or: strongly embedded)   in $X$ and write $A\leq_\d B$ (or just $A\leq B$ if $\d$ is clear from the context) if
\begin{enumerate}
\item $A$ is gated in $B$;
\item for $b,b'\in B\setminus A$ with $d(g_A(b),g_A(b'))>\d$ we have \[d(b,b')=d(b,g_A(b))+d(g_A(b),g_A(b'))+d(g_A(b'),b').\]
\end{enumerate}
\end{definition}
Clearly, any gated set is closed in the sense of the topology induced from the metric. Note however that the empty set is not gated and hence never   $\d$-closed.

\begin{remark}\label{rem:closed}
It is easy to see that being   $\d$-closed is transitive, i.e., if $A\leq_\d B$ and $B\leq_\d C$, then $A\leq_\d C$.
We also note that single points are   $\d$-closed in any metric space.
\end{remark}

\begin{lemma}\label{lem:intersection}
Let $A, B$ be $\d$-closed subsets of a  metric space $X$ with $A\cap B\neq\emptyset$.  Then for $b\in B\setminus A$ we have $g_A(b)\in A\cap B$.
\end{lemma}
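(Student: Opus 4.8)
The plan is to locate the gate $g=g_A(b)$ inside $B$ by a contradiction argument that uses only the gatedness clause (1) of Definition~\ref{def:closed}, together with the triangle inequality. First I would fix a point $c\in A\cap B$, available since the intersection is nonempty, and record that, because $A$ is gated, $g$ lies in $A$ and satisfies $d(b,a)=d(b,g)+d(g,a)$ for every $a\in A$; in particular $d(b,c)=d(b,g)+d(g,c)$. The goal is then to show $g\in B$, and I would assume for contradiction that $g\in X\setminus B$.

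Since $B$ is gated and $g\notin B$, there is a gate $h=g_B(g)\in B$ with $d(g,y)=d(g,h)+d(h,y)$ for every $y\in B$. The two points of $B$ that matter are $b$ and $c$, so applying this to $y=b$ and to $y=c$ gives $d(g,b)=d(g,h)+d(h,b)$ and $d(g,c)=d(g,h)+d(h,c)$. Substituting both identities into the geodesic equation $d(b,c)=d(b,g)+d(g,c)$ coming from the gate of $b$ in $A$ yields $d(b,c)=d(b,h)+d(h,c)+2\,d(g,h)$.

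Comparing this with the triangle inequality $d(b,c)\leq d(b,h)+d(h,c)$ forces $2\,d(g,h)\leq 0$, hence $d(g,h)=0$ and $g=h\in B$, contradicting $g\notin B$. Therefore $g=g_A(b)\in A\cap B$, as required. The only subtlety — and the place where one might overthink — is the choice of which point to feed into the gate of $B$: the argument hinges on gating $g$ itself into $B$ and then chaining the three geodesic identities so that the surplus term $2\,d(g,h)$ collides head-on with the triangle inequality. I expect no genuine obstacle beyond this; notably, clause (2) of Definition~\ref{def:closed} is never used, so the statement in fact holds for any two gated subsets with nonempty intersection.
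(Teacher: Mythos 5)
Your proof is correct and follows essentially the same route as the paper's: the paper observes that $(b,g_A(b),c)$ is geodetic with $b,c\in B$ and then invokes Remark~\ref{rem:geodetic} (convexity of gated sets), whereas you simply re-derive that remark inline by gating $g_A(b)$ into $B$ and letting the surplus $2\,d(g,h)$ collide with the triangle inequality. Your closing observation that only gatedness is needed is consistent with the paper, since Remark~\ref{rem:geodetic} uses only clause (1) of Definition~\ref{def:closed}.
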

\begin{proof}
 Let $c\in A\cap B$.
Then $(b,g_A(b),c)$ is geodetic with $b,c\in B$. By Remark~\ref{rem:geodetic} we have   $g_A(b)\in B\cap A$.
\end{proof}

\begin{corollary}\label{cor:submodular}
If $A, B$ are $\d$-closed subsets of a  metric space $X$ with $A\cap B\neq\emptyset$, then $A\cap B\leq_\d B\leq_\d X$. 
\end{corollary}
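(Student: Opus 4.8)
The plan is to observe that the assertion $B\leq_\d X$ is simply one of the hypotheses (both $A$ and $B$ are assumed $\d$-closed in $X$), so the only genuine content of the corollary is the relation $A\cap B\leq_\d B$. To establish this I would verify the two clauses of Definition~\ref{def:closed} for $A\cap B$ inside $B$: that $A\cap B$ is gated in $B$, and that the geodetic splitting condition (2) holds for pairs of points of $B$ whose gates in $A\cap B$ are more than $\d$ apart. Throughout I would use that $A\cap B\neq\emptyset$, so that Lemma~\ref{lem:intersection} is available.

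For gatedness, the key point is that every $b\in B\setminus(A\cap B)$ actually lies in $B\setminus A$: if $b$ were in $A$, then together with $b\in B$ it would lie in $A\cap B$, a contradiction. Hence Lemma~\ref{lem:intersection} applies and gives $g_A(b)\in A\cap B$. I would then check that this same point is a gate for $b$ within $A\cap B$: since $A$ is gated in $X$ and $A\cap B\subseteq A$, the gate identity $d(b,c)=d(b,g_A(b))+d(g_A(b),c)$ holds for every $c\in A$, in particular for every $c\in A\cap B$. As $g_A(b)\in A\cap B$, this exhibits $g_A(b)$ as a gate for $b$ in $A\cap B$, and uniqueness of gates (Remark~\ref{rem:unique gate}) gives $g_{A\cap B}(b)=g_A(b)$.

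With the gates identified, condition (2) transfers immediately. Given $b,b'\in B\setminus(A\cap B)=B\setminus A$ with $d(g_{A\cap B}(b),g_{A\cap B}(b'))>\d$, the previous step rewrites this as $d(g_A(b),g_A(b'))>\d$; and since $b,b'\in B\subseteq X$ lie in $X\setminus A$, condition (2) of $A\leq_\d X$ yields exactly
\[d(b,b')=d(b,g_A(b))+d(g_A(b),g_A(b'))+d(g_A(b'),b'),\]
which is the required splitting for $A\cap B\leq_\d B$ once the gates are renamed. The only step demanding genuine care is the gate-identification in the middle paragraph --- recognizing that the gate of $b$ computed in the larger set $A$ already lands in $A\cap B$ and serves as the gate there; everything else is a direct substitution into the definitions. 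I would not expect any serious obstacle beyond this bookkeeping.
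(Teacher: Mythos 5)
Your proposal is correct and takes essentially the same approach as the paper: the paper's own proof likewise reduces everything to showing $A\cap B$ is $\d$-closed in $B$ via Lemma~\ref{lem:intersection}, which is exactly the gate-identification $g_{A\cap B}(b)=g_A(b)\in A\cap B$ that you carry out. Your write-up just makes explicit the verification of both clauses of Definition~\ref{def:closed} that the paper leaves to the reader.
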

\begin{proof}
Since being   $\d$-closed is transitive by Remark~\ref{rem:closed}, it suffices to show that $A\cap B$ is  $\d$-closed in $A$ and in $B$ and this follows from Lemma~\ref{lem:intersection}. 
\end{proof}

\begin{corollary}\label{cor:closure}
Any nonempty subset $A$ of a metric space $X$ is contained in a unique smallest   $\d$-closed set $\cl(A)=\bigcap \{B\leq_\d X\colon A\subset B\}$.
\end{corollary}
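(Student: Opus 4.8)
The plan is to reduce everything to a single claim: that an intersection of $\d$-closed subsets with nonempty total intersection is again $\d$-closed. First I would check that the family $\mathcal{F}=\{B\leq_\d X\colon A\subseteq B\}$ is nonempty, which holds because $X\leq_\d X$ (both conditions of Definition~\ref{def:closed} being vacuous for $B=X$). Since $A\subseteq B$ for every $B\in\mathcal{F}$, we get $A\subseteq C:=\bigcap\mathcal{F}$, so $C$ is a nonempty subset of $X$, and by construction $C\subseteq B$ for all $B\in\mathcal{F}$. Hence, once $C$ is shown to be $\d$-closed, it is automatically the smallest $\d$-closed superset of $A$, and uniqueness is free: two smallest such sets would each contain the other. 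All the work is therefore in verifying $C\leq_\d X$.

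For finite intersections this is immediate: Corollary~\ref{cor:submodular} gives $B_1\cap B_2\leq_\d X$ whenever $B_1,B_2\leq_\d X$ and $B_1\cap B_2\neq\emptyset$, and induction covers any finite subfamily. I would first isolate the computational engine behind this, namely the gate-composition formula $g_{B_1\cap B_2}(b)=g_{B_1}\bigl(g_{B_2}(b)\bigr)$: indeed $g_{B_2}(b)\in B_2$, so $g_{B_1}(g_{B_2}(b))\in B_1\cap B_2$ by Lemma~\ref{lem:intersection}, and the geodetic identities defining the two gates add up to show this point is the gate of $b$ in $B_1\cap B_2$. This formula is what controls how gates behave as the intersected family grows.

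For the general intersection $C=\bigcap\mathcal{F}$ I would fix $b\in X\setminus C$ and look at the directed system of finite subintersections $C_J=\bigcap_{B\in J}B$ over finite $J\subseteq\mathcal{F}$. Each $C_J$ is $\d$-closed, hence gated, with gate $p_J=g_{C_J}(b)$; the composition formula shows that along refinements $J\subseteq J'$ one has $d(b,p_{J'})=d(b,p_J)+d(p_J,p_{J'})$, so the numbers $d(b,p_J)$ increase and are bounded by $d(b,a_0)$ for any fixed $a_0\in A$. A short estimate using a common refinement then shows $(p_J)$ is a Cauchy net. Its limit $g$ lies in $C$ because each $B\in\mathcal{F}$ is metrically closed and eventually contains the net, and passing to the limit in the gate identities for the $C_J$ shows $g$ is a gate for $b$ in $C$; condition~(2) of Definition~\ref{def:closed} for $C$ is likewise obtained by approximating $b,b'$ by their gates in the $C_J$ and using that the additive distance identity is preserved under limits.

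The main obstacle is exactly the convergence of this net of approximate gates, i.e. the existence of a nearest point in the infinite intersection. This step requires the ambient space $X$ to be complete (or otherwise to guarantee nearest points): in an incomplete space the Cauchy net $(p_J)$ need not converge, and one can then exhibit subsets with no smallest gated superset, for instance in $X=\mathbb{Q}$. For the spaces constructed in this paper completeness is available, so I would either invoke it directly or restrict attention to the setting in which the relevant nearest points exist; with that in hand the argument above closes and $C=\cl(A)$ is the unique smallest $\d$-closed set containing $A$.
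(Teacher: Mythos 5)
The paper gives no proof of this corollary at all: it is stated as an immediate consequence of Corollary~\ref{cor:submodular}, i.e.\ of the fact that \emph{pairwise} (hence finite) intersections of $\d$-closed sets are $\d$-closed. Your proposal reproduces that finite case correctly, and then, unlike the paper, confronts the real issue: $\cl(A)$ is an intersection of a possibly infinite family, and closure under pairwise intersection does not formally give this. Your net-of-gates argument for that step is sound: for nested gated sets $C_{J'}\subseteq C_J$ one has $d(b,p_{J'})=d(b,p_J)+d(p_J,p_{J'})$ (nestedness alone suffices, you do not even need the composition formula), the net of gates is Cauchy, gated sets are metrically closed, and both conditions of Definition~\ref{def:closed} pass to the limit. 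So you prove the statement for complete $X$, by a genuinely different and more careful route than the paper's implicit one.

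Your suspicion that completeness is really needed is also correct, and it is worth making explicit that this \emph{refutes the corollary as literally stated}. Take $X=\Q$ (one of the paper's own examples, Examples~\ref{ex: dense set}) and $A=\{q\in\Q\colon q>\sqrt{2}\}$. Gated sets are convex, and a convex subset of $\Q$ having points of $\Q$ below it is gated only if it has a minimum; hence the $\d$-closed supersets of $A$ are exactly $\Q$ itself and the sets $\Q\cap[r,\infty)$ with $r\in\Q$, $r<\sqrt{2}$. Their total intersection is $A$, which is not gated, and the family has no smallest element; so for this $A$ no smallest $\d$-closed superset exists, for any $\d\geq 0$.

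The genuine gap in your write-up is the final claim that ``for the spaces constructed in this paper completeness is available.'' It is not: $\H_\d$ is countable and has no isolated points (by universality and transitivity of $\Isom(\H_\d)$, every point has points at every positive rational distance), so by the Baire category theorem it cannot be complete; the paper itself distinguishes $\H_\C$ from its completion in Examples~\ref{ex: dense set}. The patch that actually covers the paper's uses of the corollary (Corollary~\ref{cor: marked closure}, Lemma~\ref{lem:tree}) is finiteness rather than completeness: there $A$ is finite and, by the construction of the limit, lies in some finite (respectively distinguished) $\d$-closed subspace $B_0$; then $\bigcap\{B\leq_\d X\colon A\subseteq B\}=\bigcap\{B\cap B_0\colon A\subseteq B\leq_\d X\}$ is an intersection of $\d$-closed subsets of the finite set $B_0$, hence a finite intersection, and Corollary~\ref{cor:submodular} with Remark~\ref{rem:closed} finishes the argument (for marked closures one can instead quote that distinguished subspaces are closed under intersections by definition). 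So your mathematics is essentially right and sharper than the paper's, but the hypothesis you add to rescue the statement is the wrong one for the spaces the paper actually applies it to.
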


By Remark~\ref{rem:geodetic} any   $\d$-closed set is convex. However, the converse does not hold:

Suppose $x, y\in X$ and there is some $z\in X$ such that $\{x,y,z\}$ is a non-degenerate triangle. If there is no $z'$ such that both $(z,z',x)$ and $(z,z',y)$ are geodetic,  then $z\in\cl(\{x,y\})$.

Note also that the union of  $\d$-closed subsets $A, B$ need not be  $\d$-closed. However, we have the following partial result which will be used in Section~\ref{sec:isomgroup}.

\begin{lemma}\label{lem:strong union}
Suppose $A, B$ are  $\d$-closed subsets of a metric space $X$, $A\cap B\neq\emptyset$ and $B_\d(A\cap B)\cap (B\cup A)=B\cap A$ where $B_\d(A\cap B)$ denotes the ball of radius $\d$ around $A\cap B$. Then $A\cup B$ is  $\d$-closed in $X$.
\end{lemma}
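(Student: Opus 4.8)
The plan is to verify the two clauses of Definition~\ref{def:closed} for $C:=A\cup B$ directly, writing $D:=A\cap B$ and abbreviating the gate maps by $g_A,g_B$. Throughout I will use the hypothesis in the form: \emph{every point of $C\setminus D$ lies at distance strictly greater than $\d$ from $D$} (this is what $B_\d(D)\cap C=D$ asserts). For gatedness, fix $x\in X\setminus C$; by Remark~\ref{rem:unique gate} it suffices to exhibit a single point of $C$ through which all distances from $x$ factor, as it is then automatically the unique nearest point. By the symmetry of the hypothesis in $A,B$ I may assume $d(x,g_A(x))\leq d(x,g_B(x))$ and will show $g:=g_A(x)$ is the gate of $x$ in $C$. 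The key preliminary is the following \emph{Claim: at least one of $g_A(x),g_B(x)$ lies in $D$.} To see this, suppose instead $g_A(x)\in A\setminus B$ and $g_B(x)\in B\setminus A$. Since $g_A(x)\notin B$, Lemma~\ref{lem:intersection} gives $p:=g_B(g_A(x))\in D$; and $x,g_A(x)\in X\setminus B$ with $d(g_B(x),p)>\d$, because $g_B(x)\in C\setminus D$ and $p\in D$. Clause~(2) of $B\leq_\d X$ then yields $d(x,g_A(x))=d(x,g_B(x))+d(g_B(x),p)+d(p,g_A(x))>d(x,g_B(x))$, contradicting the standing inequality.

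Granting the Claim, I split into two cases. If $g_B(x)\in D\subseteq A$, then, $g$ being the gate in $A$, the triple $(x,g,g_B(x))$ is geodetic; hence for every $b\in B$ we have $d(x,b)=d(x,g_B(x))+d(g_B(x),b)=d(x,g)+d(g,g_B(x))+d(g_B(x),b)$, and comparing with the triangle bound $d(x,b)\leq d(x,g)+d(g,b)$ forces $d(x,b)=d(x,g)+d(g,b)$. Together with the gate property in $A$ this shows $g$ is the gate of $x$ in $C$. If instead $g_A(x)\in D\subseteq B$, then $g\in B$, so $d(x,g)=d(x,g_B(x))+d(g_B(x),g)$, and the assumed inequality forces $g=g_B(x)\in D$, which is then simultaneously the gate in $A$ and in $B$. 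Either way $g_C(x)=g$ exists and lies in $A$ or in $B$.

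For clause~(2), take $x,x'\in X\setminus C$ and set $G:=g_C(x)$, $G':=g_C(x')$ with $d(G,G')>\d$. Since $G$ is the nearest point of $C\supseteq A$ to $x$, if $G\in A$ then $G=g_A(x)$, and similarly in the other cases. If $G,G'\in A$ (respectively both in $B$) the required additivity is exactly clause~(2) for $A\leq_\d X$ (respectively $B$). The remaining case is $G\in A\setminus B$ and $G'\in B\setminus A$, up to symmetry. Here the Claim forces $g_B(x)\in D$ and $g_A(x')\in D$; put $m:=g_A(x')\in D$. Since $G\in A\setminus B$ and $m\in D$ we have $d(G,m)>\d$, so clause~(2) for $A\leq_\d X$ applied to $x,x'$ gives $d(x,x')=d(x,G)+d(G,m)+d(m,x')$. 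As $G'=g_B(x')$ and $m\in D\subseteq B$, the triple $(x',G',m)$ is geodetic, i.e.\ $d(m,x')=d(x',G')+d(G',m)$. Substituting and using $d(G,m)+d(m,G')\geq d(G,G')$ gives $d(x,x')\geq d(x,G)+d(G,G')+d(G',x')$, while the triangle inequality gives the reverse inequality; hence equality, as required.

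The main obstacle is precisely this mixed case: the nearest points $G,G'$ can genuinely sit in $A\setminus B$ and $B\setminus A$, so neither single-space instance of clause~(2) applies to the pair $(G,G')$. The device that resolves it is to route through a common bridging point $m\in D$ furnished by the Claim and then squeeze the two one-sided estimates against the triangle inequality. The real content is the Claim itself — that one of the two projections always lands in $D$ — and this is exactly the step where the standing hypothesis $B_\d(D)\cap C=D$ is indispensable, since it is what guarantees the strict gap $>\d$ needed to invoke clause~(2).
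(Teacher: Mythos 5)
Your proof is correct and follows essentially the same route as the paper's own argument: both parts verify the two clauses of Definition~\ref{def:closed} directly, both obtain the key contradiction in the gatedness step by routing through a bridging gate in $A\cap B$ supplied by Lemma~\ref{lem:intersection} and using the hypothesis $B_\d(A\cap B)\cap(A\cup B)=A\cap B$ to secure the $>\d$ gap needed for clause~(2), and both handle the mixed case of clause~(2) by applying clause~(2) for $A\leq_\d X$ through a point of $A\cap B$ and squeezing against the triangle inequality. The differences are cosmetic: in the gatedness step you apply clause~(2) of $B\leq_\d X$ to $(x,g_A(x))$ where the paper applies clause~(2) of $A\leq_\d X$ to $(x,g_B(x))$, and you make explicit the final squeeze and the fact that the relevant gate lies outside $A\cap B$, points the paper leaves implicit.
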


\begin{proof}
We first verify condition (1) of Definition~\ref{def:closed}, i.e. we show that $A\cup B$ is gated in $X$. Let $x\in X\setminus A\cup B$ and let $g_1=g_A(x), g_2=g_B(x)$.
 By symmetry we may assume $d(x,g_1)\leq d(x,g_2)$. 
If $g_2\in A\cap B$, then $d(x,g_2)=d(x,g_1)+d(g_1,g_2)$ and hence $g_1=g_{A\cup B}(x)$ is the required gate.

 So suppose towards a contradiction that $g_2\in B\setminus A$. Then $c=g_A(g_2)\in A\cap B$ by Lemma~\ref{lem:intersection}. By assumption $B_\d(A\cap B)\cap (B\cup A)=B\cap A$, so $d(g_1,c)>\d$ and since $A$ is $\d$-closed it follows from the second condition of Definition~\ref{def:closed} that
\[d(x,g_2)=d(x,g_1)+d(g_1,c)+d(c,g_2).\]
Hence $c=g_2$, contradicting our assumption.

We now verify condition (2) of Definition~\ref{def:closed}: let $x,y\in X$, $c=g_{A\cup B}(x),d=g_{A\cup B}(y)$ and assume $d(c,d)>\delta$. We have to show that 
\[d(x,y)=d(x,c)+d(c,d)+d(d,y).\]
 If $c,d\in A$ or $c,d\in B$, this follows from the assumption that $A, B$ are $\d$-closed.
 So suppose by symmetry that $c\in A\setminus B, d\in B\setminus A$ and let $e=g_A(y)$. Then by the first part of the proof, we have $e=g_A(d)\in A\cap B$ and hence $d(c,e)>\d$. Since $A$ is $\d$-closed, we have
$d(x,y)=d(x,c)+d(c,e)+d(e,y)$ as required.
\end{proof}

We first define a canonical amalgam for arbitrary metric spaces:
\begin{definition}\label{def:canonical amalgam}
Let $A, B_1, B_2$ be metric spaces with $A\subseteq B_1,B_2$. Then 
the \emph{canonical amalgam} $D=B_1\otimes_AB_2$ of $B_1$ and $B_2$ over $A$  is defined as the disjoint union of $A$ with $B_1\setminus A$ and $B_2\setminus A$ with the metric extending the metric on $B_1, B_2$ defined as follows: for $x\in B_1\setminus A, y\in B_2\setminus A$ we put 
\[d(x,y)=d(x, g_A(x))+d(g_A(x),g_A(y))+d(g_A(y),y).\] 
\end{definition}
The crucial lemma is the following:
\begin{lemma}\label{lem:amalgamation}

Let $A, B_1, B_2$ be $\d$-hyperbolic spaces and assume that $A\leq_\d B_1,B_2$. Then 
the canonical amalgam $D=B_1\otimes_AB_2$ of $B_1$ and $B_2$ over $A$ is $\d$-hyperbolic and $B_1,B_2\leq_\d D$.
\end{lemma}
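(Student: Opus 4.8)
The plan is to verify the two conclusions separately: first that the canonical amalgam $D=B_1\otimes_A B_2$ is $\d$-hyperbolic, and then that $B_1,B_2\leq_\d D$. For the embedding claim, the second part should be the quicker one: by construction the metric on $B_i$ is unchanged, so $B_i$ is certainly isometrically contained in $D$. To see $B_1\leq_\d D$ I would check the two conditions of Definition~\ref{def:closed}. Gatedness of $B_1$ in $D$: for a point $y\in B_2\setminus A$, the definition of the amalgam metric $d(x,y)=d(x,g_A(y))+d(g_A(y),y)$ for all $x\in B_1$ exhibits $g_A(y)\in A\subseteq B_1$ as a gate, and for $x\in B_1\setminus A$ the gate is $x$ itself. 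For condition (2), I would take $y,y'\in D\setminus B_1$ with $d(g_{B_1}(y),g_{B_1}(y'))>\d$ and show the distance splits additively; using that the $B_1$-gates land in $A$ and that $A\leq_\d B_2$, together with the amalgam formula, this should reduce to the second closure condition already available between the two factors.

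The genuine work is proving $\d$-hyperbolicity, and that is the step I expect to be the main obstacle. The criterion from Definition~\ref{def:hyperbolic} is a four-point condition, so I would fix four points $p_1,p_2,p_3,p_4\in D$ and verify $G\le\max\{E,F\}+2\d$ by cases according to how the points distribute among $A$, $B_1\setminus A$, and $B_2\setminus A$. The cases where all four lie in one factor, or where all the ``new'' points lie on one side, are immediate since $B_1$ and $B_2$ are already $\d$-hyperbolic. The essential case is when some points come from $B_1\setminus A$ and some from $B_2\setminus A$; here the cross distances are given by the amalgam formula routing through the gates in $A$.

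The key idea for the essential case is a \emph{projection-to-the-gate} reduction. For a point $x\in B_1\setminus A$ with gate $g=g_A(x)$, the amalgam formula says that for every point $y$ on the $B_2$-side the distance to $x$ equals $d(x,g)+d(g,y)$, i.e. $x$ behaves like $g$ shifted out by the constant $d(x,g)$ along a ``geodesic direction'' orthogonal to $A$. The plan is to replace each off-$A$ point by its gate and track how the three sums $E$, $F$, $G$ change: each cross pair picks up the additive gate distances, and these contributions should cancel in the combination $G-\max\{E,F\}$, reducing the inequality for the original four points to the $\d$-hyperbolicity inequality for their gates (or for a mixed configuration of gates and original points lying in a single factor). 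Because $A\leq_\d B_1,B_2$ is assumed rather than just $A\subseteq B_i$, I can invoke the additive splitting of condition~(2) precisely when two gates are more than $\d$ apart, and handle the complementary regime (gates within $\d$) by absorbing the small discrepancy into the $2\d$ slack. I would organize the bookkeeping so that whenever two of the four points sit on opposite sides, their pairwise distance is rewritten through the gates, and then argue that the worst case of the max-inequality is already controlled by hyperbolicity of the gates inside $A$ (which is $\d$-hyperbolic as a subspace) plus the uniform $2\d$ error. The careful case analysis keeping $E\le F\le G$ aligned with the correct pairing is where the real attention is needed.
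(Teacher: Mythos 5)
Your proposal is correct and follows essentially the same route as the paper: replace the off-side points by their gates in $A$, use condition (2) of $\d$-closedness to split distances exactly when the relevant same-side gates are more than $\d$ apart, and otherwise absorb the discrepancy into the $2\d$ slack via the triangle inequality through two gates at distance $\leq\d$. The paper likewise treats $B_1,B_2\leq_\d D$ as immediate from the amalgam formula, and its two cases (three points in one factor plus one outside, and two points on each side) are exactly the instances of your gate-projection reduction.
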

\begin{proof}
It is clear from the definition of the canonical amalgam that $B_1, B_2\leq_\d D$. Hence it suffices to show that $D$ is $\d$-hyperbolic. 
 Let $x_1,\ldots, x_4\in D$. 
If $x_1,\ldots, x_4$ are contained in $B_1$ or $B_2$, there is nothing to show. 
We now write $g_i=g_A(x_i)$ if $x_i\notin A$ and $g_i=x_i$ if $x_i\in A$

\medskip\noindent
{\bf Case 1:} Assume $x_1,x_2,x_3\in B_1, x_4\in B_2\setminus A$.

Consider 
\begin{align*}
E=& d(x_1, x_2)+d(x_3,x_4)=&d(x_1,x_2)+d(x_3,g_3)+d(g_3,g_4)+d(g_4,x_4),\\
 F=& d(x_1, x_3)+d(x_2,x_4)=&d(x_1, x_3)+d(x_2,g_2)+d(g_2,g_4)+d(g_4,x_4)\\
 G=& d(x_1, x_4)+d(x_2,x_3)=&d(x_1,g_1)+d(g_1,g_4)+d(g_4, x_4)+d(x_2,x_3)
\end{align*}

Each of $E, F, G$ contains the summand $d(g_4,x_4)$. Subtracting this summand from each of $E, F, G$, we get the distances corresponding to the points $x_1,x_2,x_3,g_4\in B_1$. Since $B_1$ is hyperbolic, these points satisfy the condition in Definition~\ref{def:hyperbolic}, and hence so do $x_1,x_2,x_3, x_4$.

\medskip\noindent
{\bf Case 2:} Assume $x_1,x_2\in B_1, x_3,x_4\in B_2\setminus A$
 and let
\begin{align*}
E_1&=d(g_1,g_2)+d(g_3,g_4)\\
F_1&=d(g_1,g_3)+d(g_2,g_4)\\
G_1&=d(g_1,g_4)+d(g_2,g_3).\\
\end{align*}
Then
\begin{align*}
E&=d(x_1, x_2)+d(x_3,x_4)\leq & E_1+\sum_{i=1}^4 d(x_i,g_i) \\
 F&=d(x_1, x_3)+d(x_2,x_4)= & F_1+\sum_{i=1}^4 d(x_i,g_i) \\
 G&=d(x_1, x_4)+d(x_2,x_3)= & G_1+\sum_{i=1}^4 d(x_i,g_i).
\end{align*}

If  $d(g_3, g_4), d(g_1,g_2)>\d$, then $E=E_1+\sum_{i=1}^4 d(x_i,g_i)$. Cancelling $\sum_{i=1}^4 d(x_i,g_i)$  in each of $E,F, G$ we obtain the hyperbolicity condition for the points $g_1,\ldots, g_4\in~A$. Since $A$ is hyperbolic, the result follows.

Now assume (by symmetry) $d(g_3,g_4)\leq \d$. Then by the triangle inequality we have
\begin{align*}
|d(g_1,g_3)-d(g_1,g_4)|\leq& \d \mbox{  and } \\
|d(g_2,g_4)-d(g_2,g_3)|\leq &\d.
\end{align*}
 Hence
\begin{equation}\label{eq: amalgam}
|F_1-G_1|=|F-G|\leq 2\d.
\end{equation}

If $E\leq F\leq G$ or $F\leq E\leq G$,  the claim follows from (\ref{eq: amalgam}).

\medskip
So suppose $F\leq G<E$. Since
\[d(g_1,g_2)\leq d(g_1,g_4)+d(g_2,g_3)+\d=G_1+\d\]
we have $E_1\leq G_1+2\d$. Hence 
\[G<E\leq E_1+ \sum_{i=1}^4 d(x_i,g_i) \leq G_1+ \sum_{i=1}^4 d(x_i,g_i)+2\d= G+2\d\] and so $E-G\leq 2\d$ as required.

\end{proof}

We now specialize marked classes to $\d$-hyperbolic spaces:
\begin{definition}\label{def:marked delta class}
A \emph{marked class} of $\d$-hyperbolic spaces is 
a countable class $\C$ of $\d$-hyperbolic spaces such that all distinguished subspaces of $A$ are $\d$-closed in $A$ and $A$ contains at least one distinguished subspace consisting of a single point.
\end{definition}

Thus, by definition distinguished subspaces are $\d$-closed, but in general not all $\d$-closed subspaces are distinguished in a marked class of $\d$-hyperbolic spaces.

\begin{theorem}\label{thm:amalgamation works}
Fix $\d$ and let $\C_0$  be a marked class of $\d$-hyperbolic spaces.
Then there is a marked Fra\"iss\'e class $\C$ of  $\d$-hyperbolic spaces containing $\C_0$. Thus, $\C$ has a unique marked Fra\"iss\'e limit which is universal and homogeneous for
 distinguished  subspaces and any isometry between distinguished subspaces of $\H_\C$ extends to an isometry of $\H_\C$.
\end{theorem}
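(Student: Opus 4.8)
The plan is to construct the desired marked Fra\"iss\'e class $\C$ from the given marked class $\C_0$ by closing it off under the operations needed to satisfy (HP), (JEP), and (AP) with respect to the embeddings of marked structures (Definition~\ref{def:marked structure}), and then to invoke Theorem~\ref{thm:generalized Fraisse} to obtain the limit $\H_\C$. The heart of the matter is (AP), and the key tool is Lemma~\ref{lem:amalgamation}, which produces a $\d$-hyperbolic amalgam $B_1\otimes_A B_2$ whenever $A\leq_\d B_1,B_2$. Since embeddings of marked structures require the image to be a \emph{distinguished} subspace, and distinguished subspaces are $\d$-closed by Definition~\ref{def:marked delta class}, the hypotheses of Lemma~\ref{lem:amalgamation} are exactly met in our setting.

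First I would define $\C$ as the closure of $\C_0$ under: (i) passing to distinguished subspaces (to get (HP)); (ii) forming canonical amalgams $B_1\otimes_A B_2$ over a common distinguished subspace $A$; and (iii) forming canonical amalgams over the distinguished one-point subspace (which every element of a marked class possesses, giving (JEP) as a special case of (AP)). I would check this closure is still countable: each element of $\C_0$ has at most countably many distinguished subspaces, and each amalgamation step is indexed by a pair of structures already built together with a choice of common distinguished subspace, so the closure is a countable union of countable sets. Along the way I must specify the marking on each amalgam $D=B_1\otimes_A B_2$: I would declare its distinguished subspaces to be those distinguished subspaces of $B_1$ and of $B_2$ that are contained in one side, together with $D$ itself, ensuring by Lemma~\ref{lem:amalgamation} that $B_1,B_2\leq_\d D$ so that these inclusions are genuine marked embeddings.

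The main verification is (AP) for marked embeddings. Given $A,B,C\in\C$ with marked embeddings into $B$ and $C$, the images of $A$ are distinguished, hence $\d$-closed, subspaces; after identifying along $A$, Lemma~\ref{lem:amalgamation} gives the $\d$-hyperbolic amalgam $D=B\otimes_A C$ with $B,C\leq_\d D$, and I would equip $D$ with the marking described above so that the embeddings $B,C\hookrightarrow D$ are marked embeddings and the diagram commutes. The one subtlety to confirm is that the marked-structure isomorphism condition is respected: an embedding must induce isomorphisms on \emph{all} distinguished substructures, so I need the distinguished subspaces of $B$ (respectively $C$) to survive unchanged as distinguished subspaces of $D$, which holds by our choice of marking. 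Having established (HP), (JEP), (AP), the class $\C$ is a marked Fra\"iss\'e class, and Theorem~\ref{thm:generalized Fraisse} yields a unique limit $\H_\C$ that is universal and $\leq_\d$-homogeneous for distinguished subspaces; since $\H_\C$ is the union of a chain of distinguished, hence $\d$-hyperbolic, subspaces, it is itself $\d$-hyperbolic.

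The step I expect to be the main obstacle is the bookkeeping of the marking under amalgamation: one must check that the class of distinguished subspaces of $D$ is closed under intersections (as required by Definition~\ref{def:marked structure}) and that it contains a distinguished one-point subspace. The closure-under-intersections point is where Corollary~\ref{cor:submodular} does the work, guaranteeing that intersections of $\d$-closed subspaces remain $\d$-closed; I would argue that the intersection of a distinguished subspace of $B$ with one of $C$ lands inside $A$ and is therefore already distinguished, so no new sets need to be adjoined and the amalgam stays within the countable closure. The $\d$-hyperbolicity of the limit and the homogeneity statement then follow formally, so the genuine content is entirely in verifying that the amalgamation of Lemma~\ref{lem:amalgamation} is compatible with the marked-structure framework.
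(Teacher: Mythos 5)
Your proposal is correct and follows the same overall strategy as the paper: build $\C$ from $\C_0$ by inductively adjoining canonical amalgams $B_1\otimes_A B_2$ over distinguished (hence $\d$-closed) subspaces, obtain (JEP) by amalgamating over distinguished one-point subspaces, rely on Lemma~\ref{lem:amalgamation} for $\d$-hyperbolicity of the amalgam and $\d$-closedness of the factors, and then invoke Theorem~\ref{thm:generalized Fraisse}. The one substantive divergence is the marking you put on an amalgam $D=B_1\otimes_A B_2$: you take the \emph{minimal} marking (the distinguished subspaces inherited from $B_1$ and $B_2$, plus $D$ itself), whereas the paper takes the \emph{maximal} one, declaring distinguished every $\d$-closed $X\subseteq D$ such that $X\cap B_1$ and $X\cap B_2$ are distinguished in $B_1$, $B_2$ respectively. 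Both choices make the inclusions $B_i\hookrightarrow D$ into marked embeddings and yield a marked Fra\"iss\'e class containing $\C_0$, so both prove the theorem as stated; the paper's choice buys a richer family of distinguished subspaces in the limit (in particular, subspaces straddling both factors can be distinguished, giving homogeneity for more isometries), while yours is leaner and easier to verify. One repair to your intersection-closure argument: Corollary~\ref{cor:submodular} only gives $\d$-closedness, not distinguishedness, and ``lands inside $A$, therefore already distinguished'' is not quite right. The correct argument is that for $X$ distinguished in $B_1$ and $Y$ distinguished in $B_2$ one has $X\cap Y=(X\cap A)\cap(Y\cap A)$, and since the embeddings of $A$ into $B_1,B_2$ are marked embeddings, $X\cap A$ and $Y\cap A$ correspond to distinguished subspaces of $A$, so their intersection is distinguished in $A$ and hence in $B_1$. (Both your argument and the paper's tacitly use that each structure counts as distinguished in itself, so that $B_i$ is distinguished in $D$; this is a shared convention, not a gap particular to your proof.)
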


\begin{proof}
Since any $A\in\C$ contains a distinguished substructure consisting of a point, we obtain (JEP) by identifying distinguished points and amalgamating over this point. Hence it suffices to verify (AP).

We build the class $\C$ inductively from $\C_0$ as follows: suppose at stage $i$ we have constructed a marked class $\C_i$  of $\d$-hyperbolic spaces with countably many distinguished $\d$-closed subspaces. Then we define 
\[C_{i+1}=\C_i\cup \{B_1\otimes_A B_2\colon A\leq B_1, B_2\in C_i, A \mathrm{\ distinguished\ in\ } B_1, B_2\}. \] 
Furthermore, for any $D=B_1\otimes_A B_2$ with $A\leq B_1,B_2\in\C_i$  we define the distinguished subspaces as the  $\d$-closed subspaces $X$ of $D$ such that $X\cap B_1$ and $X\cap B_2$ are  distinguished in $B_1, B_2$, respectively.
Then $\C=\bigcup_{i\in\mathbb{N}}\C_i$ has (AP) and (JEP). 
Hence the marked Fra\"iss\'e limit exists and is as desired.
\end{proof}

The previous theorem implies the  existence of a  universal and $\leq$-homogeneous $\d$-hyperbolic space~$\H_\d$ for the class of finite $\d$-hyperbolic spaces with rational distances. (In fact, this class is a Hrushvoski class with respect to the strong embeddings defined above.)  More generally, we may allow distance taking values in any countable subsemigroup $(\mathbb{R}_{\geq 0},+)$. However, for simplicity of notation we restrict to the case of rational distances.

\begin{corollary}\label{cor: finite hyperbolic class}
Fix $\d\geq 0$ and let $\C_\d$ be the class of finite $\d$-hyperbolic spaces with rational distances. Since $\C_\d$ is countable and contains only countably many $\d$-closed subsets, we can consider this class as a marked class with all  $\d$-closed subspaces as distinguished substructures. Then the marked Fra\"iss\'e limit $\H_\d$ exists and is homogeneous and universal for finite $\d$-closed subspaces.
\end{corollary}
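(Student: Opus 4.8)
The plan is to deduce Corollary~\ref{cor: finite hyperbolic class} directly from Theorem~\ref{thm:amalgamation works} by checking that the class $\C_\d$ of finite $\d$-hyperbolic spaces with rational distances, equipped with all $\d$-closed subspaces as distinguished substructures, is a marked class of $\d$-hyperbolic spaces in the sense of Definition~\ref{def:marked delta class}. Once that verification is in place, Theorem~\ref{thm:amalgamation works} produces the limit $\H_\d$ with the stated universality and $\leq$-homogeneity, so the Corollary is essentially a matter of confirming the hypotheses.

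First I would establish countability. A finite metric space with rational distances is determined up to isometry by finitely many rationals, so there are only countably many isometry types of finite $\d$-hyperbolic spaces with rational distances; this gives $\C_\d$ countable as a class of structures. Next, for a fixed finite $A$ there are only finitely many subsets, hence finitely many $\d$-closed subspaces, so the requirement in Definition~\ref{def:marked structure} that the distinguished subspaces form an (at most) countable class is satisfied, and the parenthetical claim in the statement (``contains only countably many $\d$-closed subsets'') holds trivially per structure. I would also record that the collection of $\d$-closed subspaces of a fixed $A$ is closed under intersection: by Corollary~\ref{cor:submodular}, if $X,Y\leq_\d A$ with $X\cap Y\neq\emptyset$ then $X\cap Y\leq_\d A$, and single points are $\d$-closed by Remark~\ref{rem:closed}, which also furnishes the required distinguished one-point subspace demanded by Definition~\ref{def:marked delta class}. (One should note the empty intersection is handled by the convention that $\emptyset$ is never $\d$-closed, so the intersection-closure requirement applies to nonempty intersections, matching Corollary~\ref{cor:submodular}.)

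With these checks done, $\C_\d$ is a marked class of $\d$-hyperbolic spaces, and Theorem~\ref{thm:amalgamation works} applies verbatim: it embeds $\C_\d$ into a marked Fra\"iss\'e class $\C$ via the canonical-amalgam construction of Lemma~\ref{lem:amalgamation}, yielding the limit $\H_\d=\H_\C$. Universality for all finite $\d$-hyperbolic spaces with rational distances follows because each such space appears in $\C_0=\C_\d\subseteq\C$ and hence embeds into $\H_\d$ as a distinguished subspace; $\leq$-homogeneity for $\d$-closed subspaces follows because, for spaces in $\C_\d$, every $\d$-closed subspace is by fiat a distinguished substructure, so the homogeneity clause of Theorem~\ref{thm:amalgamation works} gives that any isometry between $\d$-closed subspaces extends to an isometry of $\H_\d$.

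I expect no genuine obstacle here, since the real work is already carried by Lemma~\ref{lem:amalgamation} (hyperbolicity of the canonical amalgam) and Theorem~\ref{thm:amalgamation works} (existence of the limit); the Corollary is a specialization. The one point deserving care is the interaction between the distinguished-substructure bookkeeping in Theorem~\ref{thm:amalgamation works} and the simpler prescription here that \emph{all} $\d$-closed subspaces count as distinguished: I would verify that, in the inductive construction of $\C$ from $\C_0=\C_\d$, the distinguished subspaces assigned to an amalgam $B_1\otimes_A B_2$ (those $\d$-closed $X$ with $X\cap B_i$ distinguished in $B_i$) remain consistent with taking all $\d$-closed subspaces as distinguished, so that the limit's notion of ``distinguished'' coincides with ``$\d$-closed'' on the relevant finite subspaces. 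This is where I would be slightly careful, but it reduces to the transitivity and intersection properties already recorded in Remark~\ref{rem:closed} and Corollary~\ref{cor:submodular}, so it is routine rather than hard.
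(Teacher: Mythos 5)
Your argument is correct, but it takes a detour that the paper's own proof avoids: you feed $\C_\d$ into Theorem~\ref{thm:amalgamation works}, which inductively enlarges the class by adjoining canonical amalgams, whereas the paper applies Lemma~\ref{lem:amalgamation} directly to $\C_\d$ and then invokes the generalized Fra\"iss\'e theorem (Theorem~\ref{thm:generalized Fraisse}). The observation that makes the direct route work, and which your write-up never isolates, is that $\C_\d$ is \emph{already closed} under canonical amalgamation: if $A\leq_\d B_1,B_2$ with $B_1,B_2$ finite with rational distances, then $B_1\otimes_A B_2$ is again finite with rational distances (every new distance is a sum of old ones), is $\d$-hyperbolic, and contains $B_1,B_2$ as $\d$-closed subsets by Lemma~\ref{lem:amalgamation}, while (JEP) comes from amalgamating over a distinguished point. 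Hence $\C_\d$, marked by all its $\d$-closed subsets, is itself a marked Fra\"iss\'e class and no enlargement is needed. This is precisely what dissolves the ``one point deserving care'' at the end of your proposal: the consistency check between the inductive marking of amalgams in Theorem~\ref{thm:amalgamation works} and the prescription that all $\d$-closed subspaces be distinguished only arises because you enlarged the class; your resolution via Corollary~\ref{cor:submodular} is the right computation, so your route does go through, just with bookkeeping the paper never incurs. Finally, note that both routes owe one last step which the paper states explicitly and you only gesture at: the limit's homogeneity clause speaks of \emph{distinguished} substructures, so one must check that every finite $\d$-closed subspace of $\H_\d$ is in fact distinguished --- it lies inside some member $M$ of the cofinal chain of distinguished substructures, both clauses of Definition~\ref{def:closed} restrict from $\H_\d$ to $M$, so it is $\d$-closed in $M\in\C_\d$, hence distinguished in $M$ and therefore in $\H_\d$. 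Only after this does homogeneity for distinguished subspaces translate into the homogeneity for finite $\d$-closed subspaces asserted in the corollary.
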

\begin{proof}
By Lemma~\ref{lem:amalgamation} the marked class $\C_\d$ has (AP) and (JEP). Since every finite $\d$-closed subset of $A\in\C_\d$ is a distinguished substructure, the same holds for the marked Fra\"iss\'e limit $\H_\d$.  Hence $\H_\d$ is homogeneous and universal for finite $\d$-closed subspaces.
\end{proof}

The same reasoning also applies to the class of finite $\d$-hyperbolic graphs with the graph metric considered as a marked class with all finite $\d$-closed  induced subgraphs as distinguished substructures. Note that since $\d$-closed sets are convex, the graph distance on a $\d$-closed subgraph agrees with the graph distance in the ambient graph. Hence we obtain:

\begin{corollary}\label{cor: finite hyperbolic graphs}
For  any $\d\geq 0$ there is a countable $\d$-hyperbolic graph $\G_\d$  universal and homogeneous for finite $\d$-hyperbolic graphs.
\end{corollary}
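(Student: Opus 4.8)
The plan is to mirror the proofs of the two preceding corollaries, so the real content is in checking that the class of finite $\d$-hyperbolic graphs fits the marked-class framework and that the amalgamation construction stays inside this class. First I would set up $\C_0$ as the class of all finite $\d$-hyperbolic graphs, where a graph is regarded as a metric space via its graph metric, with distinguished substructures taken to be all finite $\d$-closed \emph{induced} subgraphs. I must check this really is a marked class in the sense of Definition~\ref{def:marked delta class}: the distinguished subspaces are $\d$-closed by construction, single vertices are $\d$-closed by Remark~\ref{rem:closed}, and the class is countable since there are only countably many finite graphs up to isomorphism. The point flagged in the remark preceding the statement is the key compatibility fact: because $\d$-closed sets are convex (Remark~\ref{rem:geodetic}), the induced graph metric on a $\d$-closed subgraph coincides with the restriction of the ambient graph metric, so a distinguished subgraph is genuinely a graph with its own graph metric and the notion of embedding of marked structures is coherent.

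Next I would invoke Lemma~\ref{lem:amalgamation} to get (AP) and (JEP): given $A\leq_\d B_1,B_2$ with $A$ distinguished, the canonical amalgam $D=B_1\otimes_A B_2$ is $\d$-hyperbolic and $B_1,B_2\leq_\d D$. The nontrivial verification here is that $D$ is again a \emph{graph}, i.e.\ that the amalgamated metric is a graph metric with integer edge lengths. Since $B_1,B_2$ are graphs with integer distances and $A$ is a common induced $\d$-closed subgraph, the amalgamation formula of Definition~\ref{def:canonical amalgam} produces integer values for the new distances $d(x,y)$ with $x\in B_1\setminus A,\ y\in B_2\setminus A$, because each of the three summands is a graph distance in $B_1$, in $A$, and in $B_2$ respectively. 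One then defines the graph $\G_D$ on the vertex set of $D$ by gluing the edge sets of $B_1$ and $B_2$ along $A$ (the $\d$-closedness guarantees $A$ sits as a full induced subgraph in both), and checks that the resulting graph metric reproduces the amalgamated metric exactly. This is the step I expect to be the main obstacle: I must confirm that no unintended shortcuts appear, i.e.\ that a shortest path in $\G_D$ between $x\in B_1\setminus A$ and $y\in B_2\setminus A$ necessarily passes through $A$ via the gates, which is precisely what convexity of the gated set $A$ and the gate formula give us.

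With the graph structure of the amalgam secured, the argument of Theorem~\ref{thm:amalgamation works} applies verbatim: I build the marked class $\C=\bigcup_i \C_i$ by closing $\C_0$ under the canonical amalgam, declaring the distinguished subspaces of each $B_1\otimes_A B_2$ to be the finite $\d$-closed subgraphs $X$ with $X\cap B_1,\ X\cap B_2$ distinguished in $B_1,B_2$; then $\C$ has (HP), (JEP) and (AP), so by Theorem~\ref{thm:generalized Fraisse} its marked Fra\"iss\'e limit $\H_\C$ exists. Finally I would observe that because every finite $\d$-closed induced subgraph of every member of $\C$ is a distinguished substructure, the same holds in the limit, so $\H_\C$ is universal and homogeneous for finite $\d$-hyperbolic graphs, exactly as in Corollary~\ref{cor: finite hyperbolic class}. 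Taking $\G_\d:=\H_\C$ and noting that the limit is a countable graph (it is a countable union of finite graphs glued along the construction) completes the proof. The only genuinely graph-specific work beyond Corollary~\ref{cor: finite hyperbolic class} is the integrality/graph-metric check in the amalgamation step, so I would state that carefully and leave the rest as a direct transfer.
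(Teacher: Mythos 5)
Your proposal is correct and follows essentially the same route as the paper: the paper's ``proof'' is precisely the remark preceding the corollary, namely that the reasoning of Corollary~\ref{cor: finite hyperbolic class} applies to finite $\d$-hyperbolic graphs with $\d$-closed induced subgraphs as distinguished substructures, the key point being that convexity of $\d$-closed sets makes the induced graph metric agree with the ambient one. Your additional verification that the canonical amalgam is again a graph (gluing edge sets along $A$, with the gate formula ruling out shortcuts) is a correct and welcome filling-in of a detail the paper leaves implicit.
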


Note that $\H_\d$ (and of course $\G_\d$) are discrete $\d$-hyperbolic spaces. However, we can also obtain non-discrete or geodesic homogeneous $\d$-hyperbolic spaces as the next examples show:

\begin{examples}\label{ex: dense set}

$\mathrm{(i)}$  Let $T$ be a regular tree with vertices of valency $k\geq 3$. Note that the set of vertices of $T$ with the graph metric is a countable $0$-hyperbolic space. Since every infinite ray in $T$ is a  $\d$-closed subset of $T$, we see that a countable metric space may have uncountably many  $\d$-closed subspaces.

$\mathrm{(ii)}$ Let $\C_0$ consist of the rationals as a metric space. Since there are only countably many  $\d$-closed subsets in the rationals, we can take all  $\d$-closed subsets as distinguished subspaces. This satisfies the assumptions of Theorem~\ref{thm:amalgamation works} and $\H_\C$ will be a tree infinitely branching at every point with all bi-infinite paths isometric to the rationals and its completion is an $\mathbb{R}$-tree.

$\mathrm{(iii)}$  Let $\C_0$ be the real line as a metric space with a countable collection  of  $\d$-closed  intervals as distinguished subspaces. Then the marked Fra\"iss\'e limit is again an $\mathbb{R}$-tree infinitely branching at countably many points.
\end{examples}

\section{Projection subspaces of $\H_\C$}\label{sec:projection}

In \cite{Bestvina} the authors show how to assemble a class $\bf{Y}$ of geodesic spaces to a quasi-tree. The purpose of this section is to show that the quasi-tree of spaces they construct can be retrieved in the corresponding universal space $\H_\C$, at least if the class $\bf{Y}$ is countable.

We write
\[d(X,Y)=\inf\{d(x,y)\colon x\in X, y\in Y\}.\] 

For   $\d$-closed subsets, this infinimum is attained:

\begin{lemma}\label{lem: projections}
Let $A, B$ be   $\d$-closed and disjoint subsets of a metric space $X$ and let $G_A(B)=\{g_A(b)\colon b\in B\}$ and $G_B(A)=\{g_B(a)\colon a\in A\}$.
Then \[\diam(G_A(B)), \diam(G_A(B))\leq\d.\] 
Furthermore, for all $a\in G_A(B), b\in G_B(A)$ we have  \[d(a,g_B(a))=d(b,g_A(b))=d(A,B)\] and there is an isometry \[g:G_A(B)\longrightarrow G_B(A),\quad a\mapsto g_B(a).\]

\end{lemma}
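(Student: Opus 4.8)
The plan is to write $\mu = d(A,B)$ and to handle the three assertions in the order: first the diameter bound (the only place where $\delta$-closedness beyond plain gatedness is used), then the fact that the two gate projections are mutually inverse bijections between $G_A(B)$ and $G_B(A)$, and finally the isometry together with the common value $\mu$ of the diagonal distances.

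For the diameter bound I would argue by contradiction. Given $b_1,b_2\in B$ with gates $a_i=g_A(b_i)$ and $d(a_1,a_2)>\delta$, condition (2) of Definition~\ref{def:closed} applied to $A$ (note $b_1,b_2\in B\subseteq X\setminus A$) yields $d(b_1,b_2)=d(b_1,a_1)+d(a_1,a_2)+d(a_2,b_2)$. Since $a_2=g_A(b_2)$ and $a_1\in A$ give $d(a_1,b_2)=d(a_1,a_2)+d(a_2,b_2)$, this says exactly that $(b_1,a_1,b_2)$ is geodetic. As $B$ is gated, hence convex (Remark~\ref{rem:geodetic}), this forces $a_1\in B$, contradicting $A\cap B=\emptyset$. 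Thus $\diam G_A(B)\leq\delta$, and symmetrically $\diam G_B(A)\leq\delta$.

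The technical heart, and the step I expect to cause the most trouble, is showing that the gate maps are \emph{exact} mutual inverses on the projections, i.e. $g_A(g_B(a))=a$ for every $a\in G_A(B)$ (and symmetrically), rather than merely inverse up to $\delta$. I would first record an auxiliary identity valid for arbitrary disjoint gated sets: for any $a\in A$, setting $b=g_B(a)$ and $a'=g_A(b)$, one has $g_B(a')=b$. This follows by squeezing: writing $b''=g_B(a')$, the chain $d(a,b)\leq d(a,b'')\leq d(a,a')+d(a',b'')\leq d(a,a')+d(a',b)=d(a,b)$ — using that $b$ is the nearest point of $B$ to $a$, the triangle inequality, that $b''$ is the nearest point of $B$ to $a'$, and the gate relation $d(a,b)=d(a,a')+d(a',b)$ for $a'=g_A(b)$ — forces equality throughout, so $d(a,b'')=d(a,B)$ and uniqueness of the gate (Remark~\ref{rem:unique gate}) gives $b''=b$. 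Now fix $a\in G_A(B)$, say $a=g_A(b_0)$, and put $\beta=g_B(a)$, $\alpha=g_A(\beta)$; the auxiliary identity gives $g_B(\alpha)=\beta$. Comparing the two evaluations $d(b_0,\alpha)=d(b_0,a)+d(a,\alpha)$ (from $a=g_A(b_0)$) and $d(b_0,\alpha)=d(\alpha,\beta)+d(\beta,b_0)$ (from $\beta=g_B(\alpha)$), and substituting $d(b_0,a)=d(a,\beta)+d(\beta,b_0)$ (from $\beta=g_B(a)$), the common summand $d(\beta,b_0)$ cancels and leaves $d(a,\alpha)+d(a,\beta)=d(\alpha,\beta)$. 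Since $\alpha=g_A(\beta)$ is the closest point of $A$ to $\beta$, we have $d(\alpha,\beta)\leq d(a,\beta)$, whence $d(a,\alpha)=0$ and $\alpha=a$. Consequently $\psi=g_B|_{G_A(B)}$ and $\phi=g_A|_{G_B(A)}$ are mutually inverse bijections between $G_A(B)$ and $G_B(A)$.

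With the involution in hand, the isometry and the diagonal value are routine bookkeeping with the gate relations. For $a_1,a_2\in G_A(B)$ with $\beta_i=g_B(a_i)$ (so $g_A(\beta_i)=a_i$), the relations $\beta_1=g_B(a_1)$ and $a_2=g_A(\beta_2)$ give $d(a_1,\beta_1)+d(\beta_1,\beta_2)=d(a_1,\beta_2)=d(a_2,\beta_2)+d(a_1,a_2)$; the index-swapped version gives the companion identity, and adding the two forces $d(\beta_1,\beta_2)=d(a_1,a_2)$ while subtracting forces $d(a_1,\beta_1)=d(a_2,\beta_2)$. Thus $\psi$ is distance-preserving and the diagonal distance $\nu:=d(a,g_B(a))$ is constant on $G_A(B)$. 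Finally, for arbitrary $a'\in A$, $b'\in B$, writing $a''=g_A(b')\in G_A(B)$ gives $d(a',b')=d(a',a'')+d(a'',b')\geq d(a'',b')\geq d(a'',B)=\nu$, so $\mu=d(A,B)\geq\nu$; and $\nu=d(a,B)\geq\mu$ trivially, hence $\nu=\mu$. This yields $d(a,g_B(a))=\mu$ on $G_A(B)$ and, by symmetry, $d(b,g_A(b))=\mu$ on $G_B(A)$, with $g=\psi$ the asserted isometry.
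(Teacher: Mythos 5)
Your proof is correct, and it follows the same overall decomposition as the paper's — mutually inverse gate bijections between $G_A(B)$ and $G_B(A)$, the diameter bound via condition (2) of Definition~\ref{def:closed}, and then the isometry and the constant diagonal distance extracted from the gate identities — but two of the three steps are executed by genuinely different arguments. For the diameter bound, the paper applies condition (2) twice, once to $A$ and once to $B$: from $d(a_1,a_2)>\delta$ it deduces $d(b_1,b_2)>\delta$ and then $d(a_1,a_2)=d(a_1,b_1)+d(b_1,b_2)+d(b_2,a_2)$, which forces $d(a_i,b_i)=0$ against disjointness. You instead apply condition (2) only to $A$ and then invoke convexity of the gated set $B$ (Remark~\ref{rem:geodetic}) to force $a_1\in B$; this is slightly more economical and in fact proves a marginally stronger statement, since $\diam G_A(B)\le\delta$ then needs only $A$ to be $\delta$-closed and $B$ to be gated. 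For the isometry and the constancy of $d(a,g_B(a))$, the paper runs two separate contradiction arguments (``suppose $d(a,b)<d(a',b')$'' and ``suppose $d(a,a')<d(b,b')$'', each requiring a ``by symmetry we may assume'' step), whereas your addition/subtraction of the two gate identities yields both equalities at once, positively and without case analysis. Finally, you supply in full two details the paper only asserts: the identity $g_A(g_B(a))=a$ (stated in the paper without the squeezing argument) and the verification that the common diagonal value equals $d(A,B)$.
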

\begin{proof}
We first claim that $G_A(B)=\{g_A(b)\colon b\in G_B(A)\}$ and $G_B(A)=\{g_B(a)\colon a\in G_A(B)\}$: let $b\in B$ and $a=g_A(b)$.
Then $(a,g_B(a),b)$ is geodetic and $a=g_A(g_B(a))$.

Now let $a, a'\in G_A(B)$, say $a=g_A(b),a'=g_A(b'), b,b'\in G_B(A)$. Then $b=g_B(a), b'=g_B(a')$.

If $d(a,a')>\d$, we have $d(b,b')=d(b,a)+d(a,a')+d(a',b')$ since $A$ and $B$ are  $\d$-closed. Hence $d(b,b')>\d$ and thus also $d(a,a')=d(a,b)+d(b,b')+d(b',a')$, a contradiction.

Now suppose $d(a,b)<d(a',b')$. By symmetry we may assume $d(a,a')\leq d(b,b')$. Then since $b'=g_B(a')$ we have $d(a',b)=d(a',b')+d(b',b)>d(a',a)+d(a,b)$, a contradiction. Hence, $d(a,b)=d(a',b')$ is constant and equal to $d(A,B)$.

Finally suppose $d(a,a')<d(b,b')$. Then as before 
\[ d(b,a')=d(b,a)+d(a,a')=c+d(a,a')<d+d(b,b')=d(a',b')+d(b,b')=d(a',b),\]
 a contradiction. Thus, the gate map is an isometry from $G_A(B)$ to $G_B(A)$.
\end{proof}

\begin{remark}\label{rem:gate set not closed}
Note that if $A, B$ are closed and disjoint, then the gate sets $G_A(B)$ and $G_B(A)$ need not be   $\d$-closed: let $A=\{x_0,x_1,x_2\}, B=\{y_0,y_2\}$ with $d(x_0,y_0)=d(x_2,y_2)=d(x_0,x_2)=d(y_0,y_2)=\d$ and $d(x_1,x_0)=1/3\d, d(x_1,x_2)=2/3\d$. Then $A, B\leq A\cup B$, but $G_A(B)=\{x_0,x_2\}$ is not   $\d$-closed.
\end{remark}

We now introduce the projection metric on   $\d$-closed subsets from \cite{Bestvina}.

\begin{definition}\label{def:proj distance}\rm{[see \cite{Bestvina}, 4.1] }
Let $X, Y, Z$  be pairwise disjoint gated subsets of a metric space. We put
\[d^\pi_Y (X, Z) = \diam (G_Y(X) \cup G_Y(Z)).\]
\end{definition}

\begin{lemma}\label{lem:closed sets between}
Let $X, Y, Z$  be pairwise disjoint   $\d$-closed subsets of a metric space and suppose  $d^\pi_Y (X, Z)> \d$. Then
\[d(X,Z)>d(X,Y)+d(Y,Z)+\d.\]
\end{lemma}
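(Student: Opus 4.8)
The plan is to reduce the inequality to a single, carefully chosen pair of points and then exploit the second clause of $\d$-closedness of $Y$. First I would invoke Lemma~\ref{lem: projections} for the two disjoint $\d$-closed pairs $X,Y$ and $Y,Z$: the gate sets $G_Y(X)$ and $G_Y(Z)$ each have diameter at most $\d$; the gate maps are isometries; and every point of $G_Y(X)$ is of the form $g_Y(x)$ for some $x\in G_X(Y)$ with $d(x,g_Y(x))=d(X,Y)$, and likewise every point of $G_Y(Z)$ is $g_Y(z)$ for some $z\in G_Z(Y)$ with $d(z,g_Y(z))=d(Y,Z)$. Since $\diam(G_Y(X)),\diam(G_Y(Z))\le\d$ while $\diam(G_Y(X)\cup G_Y(Z))=d^\pi_Y(X,Z)>\d$, the diameter of the union cannot be realised inside a single piece; hence it is realised by a \emph{cross} pair: there exist $p\in G_Y(X)$ and $q\in G_Y(Z)$ with $d(p,q)>\d$.

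Next I would pull these gates back to the sets. Choose $x\in G_X(Y)$ and $z\in G_Z(Y)$ with $g_Y(x)=p$, $g_Y(z)=q$, so that $d(x,p)=d(X,Y)$ and $d(z,q)=d(Y,Z)$. Because $d(g_Y(x),g_Y(z))=d(p,q)>\d$, the second condition of Definition~\ref{def:closed} applied to $Y$ gives
\[
d(x,z)=d(x,p)+d(p,q)+d(q,z)=d(X,Y)+d(p,q)+d(Y,Z)>d(X,Y)+d(Y,Z)+\d .
\]
This already establishes the desired inequality for one explicit pair that realises the $Y$-distances of $X$ and of $Z$.

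The remaining—and, I expect, the genuinely hard—step is to upgrade this to a statement about $d(X,Z)$, which is an infimum over \emph{all} pairs in $X\times Z$; an inequality for one favourable pair only bounds that infimum from above. Using that the infimum between $\d$-closed sets is attained, I would instead start from a minimising pair $x^*\in G_X(Z)$, $z^*\in G_Z(X)$ with $d(x^*,z^*)=d(X,Z)$ and try to re-run the argument with $p^*=g_Y(x^*)$, $q^*=g_Y(z^*)$. The obstruction is precisely that Definition~\ref{def:closed}(2) only applies once $d(p^*,q^*)>\d$, whereas a priori the projections of the \emph{closest} pair could lie within $\d$ of each other even though the full gate sets have diameter exceeding $\d$. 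The crux is therefore a Behrstock-type, fellow-travelling argument showing that $g_Y(x^*)$ and $g_Y(z^*)$ are pinned near $p$ and $q$ respectively and in fact satisfy $d(g_Y(x^*),g_Y(z^*))>\d$; this is exactly where the four-point $\d$-hyperbolicity inequality must be spent, and I anticipate that the bookkeeping of the additive $\d$-errors is the main obstacle. Once $d(g_Y(x^*),g_Y(z^*))>\d$ is secured, applying Definition~\ref{def:closed}(2) to $x^*,z^*$ and using $d(x^*,g_Y(x^*))\ge d(X,Y)$, $d(z^*,g_Y(z^*))\ge d(Y,Z)$ yields
\[
d(X,Z)=d(x^*,z^*)=d(x^*,g_Y(x^*))+d(g_Y(x^*),g_Y(z^*))+d(g_Y(z^*),z^*)>d(X,Y)+d(Y,Z)+\d ,
\]
as required.
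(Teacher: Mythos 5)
Your first two paragraphs reproduce the paper's argument: since $\diam(G_Y(X)),\diam(G_Y(Z))\leq\d$ by Lemma~\ref{lem: projections}, the hypothesis $d^\pi_Y(X,Z)>\d$ yields a cross pair $p\in G_Y(X)$, $q\in G_Y(Z)$ with $d(p,q)>\d$, and condition (2) of Definition~\ref{def:closed} applied to $Y$ makes $(x,p,q,z)$ geodetic for the pulled-back pair $x=g_X(p)$, $z=g_Z(q)$, giving $d(x,z)>d(X,Y)+d(Y,Z)+\d$ for that one pair. Up to there you match the paper. But your third paragraph, which you yourself identify as the real content, is not a proof: the claim that the minimizing pair $x^*\in G_X(Z)$, $z^*=g_Z(x^*)$ satisfies $d(g_Y(x^*),g_Y(z^*))>\d$ is only announced, and the tool you propose for it --- the four-point $\d$-hyperbolicity inequality --- is not available, because the lemma (and Proposition~\ref{prop:projection axioms}, where it is used) assumes only that the ambient space is a metric space.

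Worse, the announced claim is not merely unproven but false in general, so no bookkeeping of additive $\d$-errors can complete this route. Take the ambient space $\mathbb{R}\times\{0,\d\}$ with the $\ell_1$-metric (two parallel lines at distance $\d$; this space is even $\d$-hyperbolic), and let $X=\{-a\}\times\{0,\d\}$, $Y=[0,L]\times\{0,\d\}$, $Z=\{L+b\}\times\{0,\d\}$ with $a,b>0$ and $0<L\leq\d$. These sets are pairwise disjoint and $\d$-closed, and $d^\pi_Y(X,Z)=L+\d>\d$, but the closest pair $x^*=(-a,0)$, $z^*=(L+b,0)$ has $d(g_Y(x^*),g_Y(z^*))=L\leq\d$; indeed $d(X,Z)=a+L+b\leq d(X,Y)+d(Y,Z)+\d$, so with this sharp constant even the conclusion fails. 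The point is that $d^\pi_Y(X,Z)>\d$ can be caused by each gate set individually having diameter close to $\d$ rather than by the two projections being far apart, and then the cross pair says nothing about the closest pair. For what it is worth, you have isolated exactly the spot where the paper's own proof is also too quick: there the gap is bridged by the one-line assertion that $x$ and $g_Y(z)$ ``have the same gate in $Z$'', which fails in the example above, since $g_Z(x)=(L+b,s)$ while $g_Z(g_Y(z))=(L+b,s')$ with $s\neq s'$. A correct argument needs either a stronger hypothesis --- e.g.\ $d^\pi_Y(X,Z)>3\d$ forces $d(g_Y(x'),g_Y(z'))>\d$ for \emph{every} $x'\in X$, $z'\in Z$, after which the computation in your second paragraph applied to all pairs finishes the proof --- or a weaker conclusion. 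As submitted, your proposal does not prove the lemma: its essential step is missing, and the route you sketch for it cannot work.
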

\begin{proof}
 Since by Lemma~\ref{lem: projections} each set $G_Y(X)$ has diameter $\leq \d$, there exist $x\in X, z\in Z$ such that $d(g_Y(x),g_Y(z))>\d$. Since $Y$ is   $\d$-closed, this implies that the sequence $(x,g_Y(x),g_Y(z),z)$ is geodetic and so $x\in X$ and $g_Y(z)\in Y$ have the same gate in $Z$. Thus the claim follows from  Lemma~\ref{lem: projections}.
\end{proof}

\begin{proposition}\label{prop:projection axioms}
Let $\bf{C}$ be a class of pairwise disjoint   $\d$-closed subsets of a metric space, $\d>0$, and $X, Y, Z, W\in \bf{C}$. Then the projection complex axioms of \cite{Bestvina}  hold, i.e. we have the following:

\begin{itemize}
\item[(PC 1)]  $d^\pi_Y (X, Z)=d^\pi_Y (Z, X)$;
\item[(PC 2)]  $d^\pi_Y (X, Z)+ d^\pi_Y (Z, W)\geq d^\pi_Y (X, W)$;
\item[(PC 3)]  $\min\{d^\pi_Y (X, Z), d^\pi_Z (X, Y)\}\leq \d$;
\item[(PC 4)]  for all $X, Z\in\bf{C}$ the number of $Y\in\bf{C}$ such that $d^\pi_Y (X, Z)>\d$ is finite.
\end{itemize}
\end{proposition}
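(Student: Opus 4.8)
The plan is to verify the four axioms separately, in increasing order of difficulty, relying on the two structural facts already established: that each gate set has diameter at most $\delta$ (Lemma~\ref{lem: projections}), and that separated projections force additivity of distances (Lemma~\ref{lem:closed sets between}). Throughout I note that every set $G_Y(\cdot)$ appearing below is nonempty: the members of $\mathbf{C}$ are pairwise disjoint and $\delta$-closed, hence gated and nonempty (the empty set is never gated), so each point of one member has a well-defined gate in another.

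Axiom (PC 1) is immediate, since $d^\pi_Y(X,Z)=\diam(G_Y(X)\cup G_Y(Z))$ is visibly symmetric in $X$ and $Z$. For (PC 2) I would fix $p,q\in G_Y(X)\cup G_Y(W)$ and bound $d(p,q)$ by cases. If $p,q\in G_Y(X)$ then $d(p,q)\le \diam G_Y(X)\le d^\pi_Y(X,Z)$, and symmetrically if both lie in $G_Y(W)$; if $p$ and $q$ lie on opposite sides, I choose any $r\in G_Y(Z)$ and use $d(p,q)\le d(p,r)+d(r,q)\le d^\pi_Y(X,Z)+d^\pi_Y(Z,W)$, since in each leg the two endpoints lie in a common gate-union. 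Taking the supremum over $p,q$ gives the subadditivity.

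For (PC 3) I would argue by contradiction. If both $d^\pi_Y(X,Z)>\delta$ and $d^\pi_Z(X,Y)>\delta$, then applying Lemma~\ref{lem:closed sets between} once with $Y$ as the middle set and once with $Z$ as the middle set yields $d(X,Z)>d(X,Y)+d(Y,Z)+\delta$ together with $d(X,Y)>d(X,Z)+d(Z,Y)+\delta$. Adding these two inequalities and cancelling $d(X,Y)+d(X,Z)$ leaves $0>2d(Y,Z)+2\delta$, which is absurd since $\delta>0$. Hence at least one of the two projection distances is at most $\delta$.

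The real work is (PC 4), which I expect to be the main obstacle. By Lemma~\ref{lem:closed sets between}, every $Y$ with $d^\pi_Y(X,Z)>\delta$ satisfies $d(X,Y)+d(Y,Z)+\delta<d(X,Z)$, so to each such $Y$ I attach the subinterval $I_Y=[\,d(X,Y),\,d(X,Z)-d(Y,Z)\,]\subseteq[0,d(X,Z)]$, whose length exceeds $\delta$. The strategy is to show these intervals are pairwise disjoint: then they are finitely many, in fact fewer than $d(X,Z)/\delta$, and we are done. Disjointness amounts to proving that for distinct such $Y,Y'$ with $d(X,Y)\le d(X,Y')$ one has $d(X,Z)\le d(X,Y')+d(Y,Z)$; equivalently, that $Y$ lies metrically between $X$ and $Y'$. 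This betweenness is precisely where the separation hypothesis $d^\pi_{Y'}(X,Z)>\delta$ must enter — a set lying off to the side of the $X$–$Z$ route would have $G_{Y'}(X)$ and $G_{Y'}(Z)$ collapse to a common gate and hence fail to be counted at all. I would carry this out by using the already-proven axioms (PC 1)–(PC 3) to install the Behrstock-type linear order on $\{Y:d^\pi_Y(X,Z)>\delta\}$ of \cite{Bestvina}, and then upgrading that combinatorial order to the metric inequality above via the additivity of Lemma~\ref{lem:closed sets between} applied to the triples $\{X,Y,Y'\}$ and $\{Y,Y',Z\}$; the resulting strict gap of more than $\delta$ between consecutive positions is what makes the intervals disjoint. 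The remaining verifications are then routine bookkeeping with the triangle inequality.
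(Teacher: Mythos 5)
Your verification of (PC~1)--(PC~3) is correct and is essentially identical to the paper's: (PC~1) by symmetry of the definition, (PC~2) by routing through a point of $G_Y(Z)$ (the paper compresses this to ``follows from the triangle inequality''), and (PC~3) by the same two applications of Lemma~\ref{lem:closed sets between} yielding contradictory inequalities.

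The gap is in (PC~4), and it is exactly at the step you flag as the ``real work.'' Disjointness of your intervals $I_Y$ and $I_{Y'}$ amounts to the \emph{upper} bound $d(X,Z)\leq d(X,Y')+d(Y,Z)$, but every tool you cite produces only \emph{lower} bounds. The two applications of Lemma~\ref{lem:closed sets between} you propose (to $\{X,Y,Y'\}$ and $\{Y,Y',Z\}$) give $d(X,Y')>d(X,Y)+d(Y,Y')+\delta$ and $d(Y,Z)>d(Y,Y')+d(Y',Z)+\delta$, hence $d(X,Y')+d(Y,Z)>d(X,Y)+2d(Y,Y')+d(Y',Z)+2\delta$; this does not bound $d(X,Z)$ from above, and no reverse triangle inequality is available for set-distances (the natural estimate $d(X,Z)\leq d(X,Y)+d^\pi_Y(X,Z)+d(Y,Z)$ reintroduces the projection diameter, which is precisely the unbounded quantity). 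Even in a tree, where the shadows of disjoint convex sets genuinely are disjoint intervals, the quantity $d(X,Y)+2d(Y,Y')+d(Y',Z)+2\delta$ can be far smaller than $d(X,Z)$, so your proposed chain of inequalities cannot prove the disjointness even where it is true. The appeal to ``installing the Behrstock-type order of \cite{Bestvina}'' cannot close this hole either: that machinery is developed under the full projection-complex axiom system, whose standing hypotheses include the finiteness axiom you are trying to prove, and its conclusions come with degraded thresholds (multiples of the constant), whereas your interval argument needs slack-free exact betweenness. Finally, the structural fact any such order must rest on --- that for two \emph{counted} sets $Y,Y'$ one cannot have both $d^\pi_{Y}(X,Y')\leq\delta$ and $d^\pi_{Y}(Z,Y')\leq\delta$ --- is never proved in your sketch; your parenthetical about sets ``lying off to the side'' concerns sets that are not counted at all, not the mutual position of two counted ones.

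The paper's proof of (PC~4) is arranged so that only lower bounds on $d(X,Z)$ are ever needed. It first proves the dichotomy just mentioned, using that all gate sets have diameter at most $\delta$ (Lemma~\ref{lem: projections}); then, assuming say $d^\pi_{Y_1}(X,Y_2)>\delta$, it applies Lemma~\ref{lem:closed sets between} twice, to $\{X,Y_1,Y_2\}$ and to $\{X,Y_2,Z\}$, and chains the results into $d(X,Z)>d(X,Y_1)+d(Y_1,Y_2)+d(Y_2,Z)+2\delta$. In particular, any two counted sets have their distances to $X$ (or, in the other case of the dichotomy, to $Z$) differing by more than $\delta$, and since all these distances lie in $[0,d(X,Z)]$, finiteness follows. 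If you want to keep your bookkeeping picture, the repairable version is not disjointness of subintervals of $[0,d(X,Z)]$ but $\delta$-separation of the pairs $\bigl(d(X,Y),d(Z,Y)\bigr)$ in the square $[0,d(X,Z)]^2$, which is what the paper's pairwise inequality actually delivers.
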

\begin{proof}
Clearly (PC 1) holds and (PC 2) follows directly from the triangle inequality in metric spaces.

For (PC 3) suppose $X, Y, Z$  are pairwise disjoint and   $\d$-closed sets with both 
\[d^\pi_Y (X, Z)>\d \mbox{\ and\ } d^\pi_Z (X, Y)> \d.\] Then by Lemma~\ref{lem:closed sets between} we have  \[d(X,Z)>d(X,Y)+d(Y,Z)+\d \mbox{\ and \ }d(X,Y)>d(X,Z)+d(Z,Y)+\d,\] which is impossible.

For (PC 4) suppose that $X, Y_1, Y_2, Z$  are pairwise disjoint   $\d$-closed sets with both $d^\pi_{Y_1} (X, Z), d^\pi_{Y_2} (X, Z)> \d$.
Since all gate sets have diameter at most $\d$, we cannot have both
$d^\pi_{Y_1}(X,Y_2)\leq \d$ and $d^\pi_{Y_1}(Z,Y_2)\leq \d$. Now if $d^\pi_{Y_1}(X,Y_2)> \d$, then by Lemma~\ref{lem:closed sets between} we have
\[d(X,Y_2)>d(X,Y_1)+d(Y_1,Y_2)+\d \mbox{ and } d(X,Z)>d(X,Y_2)+d(Y_2,Z)+\d.\]

Thus, $d(X,Z)>d(X,Y_1)+d(Y_1,Y_2)+d(Y_2,Z)+2\d$.
Hence the number of $Y\in\bf{C}$ such that $d^\pi_Y (X, Z)>\d$ is bounded by $d(X,Z)/\d$.
\end{proof}

\section{Stationary independence relations}\label{sec:SIR}

In this section we verify that each of the universal $\leq$-homogeneous spaces $\H_\C$ constructed in the previous section supports a stationary independence relation, which will be useful in the study of their isometry groups (see \cite{TZ}).

Since marked subspaces are closed under intersections, we have the following stronger form of Corollary~\ref{cor:closure}:
\begin{corollary}\label{cor: marked closure}
Any non-empty finite subset $A$ of $\H_C$ is contained in a unique smallest   $\d$-closed marked set $\cl(A)=\bigcap \{B\leq_\d X\colon A\subset B\}$.
\end{corollary}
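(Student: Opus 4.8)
The plan is to realize $\cl(A)$ as the intersection of the family
$\mathcal{F}=\{B\subseteq\H_\C : A\subseteq B,\ B\text{ a marked subspace}\}$
and then to check, in turn, that this family is nonempty, that its intersection is again marked, that it is $\d$-closed, and that it is the smallest such set. First I would establish that $\mathcal{F}\neq\emptyset$, which is the only place the finiteness of $A$ is needed: by the cofinal property of the marked Fra\"iss\'e limit (Theorem~\ref{thm:generalized Fraisse}(2)), $\H_\C$ is the union of a countable increasing chain of distinguished subspaces, so the finite set $A$ is contained in one member of the chain, which is a marked subspace.

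Next I would use the structural hypotheses directly. Since the class of distinguished subspaces of $\H_\C$ is closed under intersections (Definition~\ref{def:marked structure}) and $\bigcap\mathcal{F}\supseteq A$ is nonempty, the set $\cl(A):=\bigcap\mathcal{F}$ is itself a marked subspace; and since every distinguished subspace is $\d$-closed (Definition~\ref{def:marked delta class}), $\cl(A)$ is $\d$-closed. Minimality and uniqueness are then formal: any marked $\d$-closed $B\supseteq A$ lies in $\mathcal{F}$ and hence contains $\cl(A)$, so $\cl(A)$ is the unique smallest marked $\d$-closed set containing $A$. Comparing with Corollary~\ref{cor:closure}, each $B\in\mathcal{F}$ is $\d$-closed and contains $A$, so by minimality of the metric $\d$-closed closure we get the inclusion $\cl(A)\supseteq\bigcap\{B\leq_\d\H_\C:A\subseteq B\}$, which exhibits this marked closure as a refinement of the closure operator of Corollary~\ref{cor:closure} restricted to marked sets.

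The step I expect to be the main obstacle — indeed the only substantive point — is the second one: one must invoke ``closed under intersections'' for the possibly infinite family $\mathcal{F}$, not merely for pairs. This is legitimate because the collection of distinguished subspaces of a marked structure is countable, so $\mathcal{F}=\{B_1,B_2,\dots\}$ is countable; forming the nested finite intersections $N_k=B_1\cap\cdots\cap B_k$, each $N_k$ is marked and, by Corollary~\ref{cor:submodular} together with $N_k\supseteq A\neq\emptyset$, also $\d$-closed, which reduces the claim to closure under the countable intersection $\bigcap_k N_k=\bigcap\mathcal{F}$ that the definition of a marked structure provides. I would also flag one genuine subtlety for the reader: this marked closure can in general be strictly larger than the purely metric $\d$-closed closure of Corollary~\ref{cor:closure}, since for an arbitrary base class $\C_0$ the $\d$-closure of a finite set need not be distinguished; accordingly the intersection defining $\cl(A)$ must be understood with $B$ ranging over \emph{marked} $\d$-closed sets rather than over all $\d$-closed sets.
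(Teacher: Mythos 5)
Your proof is correct and takes essentially the same route as the paper: the paper states this corollary without a separate proof, as an immediate consequence of Corollary~\ref{cor:closure} and the fact that the distinguished (marked) subspaces of $\H_\C$ form a countable family closed under intersections, and your write-up supplies exactly the details left implicit there (nonemptiness of the family via the cofinal chain and the finiteness of $A$, $\d$-closedness of marked sets, and the formal minimality argument). Your closing remark that the intersection must be read as ranging over \emph{marked} $\d$-closed sets is also the intended reading, as confirmed by the paper's later use of $\cl$ in Definition~\ref{def:SIR}, where $\cl$ of a finite set is asserted to be a marked subset lying in $\C$.
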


\begin{definition}\label{def:SIR}
For finite subsets $A, B, C\subseteq \H_\C, B\neq\emptyset,$ we say that $A$ is independent from $C$ over $B$, $\indep{A}{B}{C}$,
if and only if $\cl(A\cup B\cup C)$ is isometric to the canonical amalgam of $\cl(A\cup B)\otimes_{\cl(B)}\cl(B\cup C)$.
\end{definition}

Note: the sets  $\cl(A\cup B\cup C),  \cl(A\cup B)$ and  $\cl(A\cup C)$  are marked subsets of $\H_\C$. The canonical amalgam  $\cl(A\cup B)\otimes_{\cl(B)}\cl(B\cup C)$ is an element in $\C$. So $A$ and $C$ are independent over $B$ if the distances in $\cl(A\cup B\cup C)$ agree with the distances in the canonical amalgam. 

\begin{proposition}\label{prop:SIR}

$\indep{}{}{}$ defines a local stationary independence relation on $\H_\C$, i.e. the following conditions are satisfied for all finite sets $A, B, C, D\subset\H_\C$ where $B\neq\emptyset$:

\begin{enumerate}
\item $\indep{A}{B}{C}$ is invariant under automorphisms of $M$;
\item 
\[\indep{A}{B}{CD} \mbox{\ if and only if \ }\indep{A}{B}{C} \mbox{ and \ } \indep{A}{BC}{D};\]

\item $\indep{A}{B}{C}$ if and only if $\indep{C}{B}{A}$;
\item there is some finite set $A'$ such that $\cl(A\cup B)$ and $\cl(A'\cup B)$ are isometric and $\indep{A'}{B}{C}$;
\item if $\cl(A\cup B)$ and $\cl(D\cup B)$ are isometric and both $A$ and $D$ are independent from $C$ over $B$, then $\cl(A\cup B\cup C)$ and $\cl(D\cup B\cup C)$ are isometric.
\end{enumerate}
\end{proposition}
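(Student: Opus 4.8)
The plan is to verify the five axioms of a local stationary independence relation, relying throughout on the fact that $\cl(A\cup B\cup C)$ is determined as an element of $\C$ up to isometry (by uniqueness of the closure, Corollary~\ref{cor: marked closure}) and on the characterization of independence as the statement that this closure is isometric to the canonical amalgam. Axiom (1) is immediate: an automorphism of $\H_\C$ maps distinguished subspaces to distinguished subspaces and commutes with the closure operator, hence sends $\cl(A\cup B)\otimes_{\cl(B)}\cl(B\cup C)$ to $\cl(\alpha A\cup \alpha B)\otimes_{\cl(\alpha B)}\cl(\alpha B\cup \alpha C)$, so the amalgam decomposition is preserved. Axiom (3) (symmetry) follows from the visible symmetry of the canonical amalgam: the metric formula $d(x,y)=d(x,g_{\cl(B)}(x))+d(g_{\cl(B)}(x),g_{\cl(B)}(y))+d(g_{\cl(B)}(y),y)$ in Definition~\ref{def:canonical amalgam} is symmetric in the two factors, so $\cl(A\cup B)\otimes_{\cl(B)}\cl(B\cup C)$ and $\cl(C\cup B)\otimes_{\cl(B)}\cl(B\cup A)$ are the same marked structure.

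For the existence axiom (4), I would produce $A'$ by taking a fresh isometric copy of $\cl(A\cup B)$ amalgamated with $\cl(B\cup C)$ over $\cl(B)$ inside $\H_\C$: by universality and homogeneity of $\H_\C$, the canonical amalgam $\cl(A\cup B)\otimes_{\cl(B)}\cl(B\cup C)$ embeds as a distinguished subspace, and the image of the first factor yields the desired $A'$ with $\cl(A'\cup B)$ isometric to $\cl(A\cup B)$ and $\indep{A'}{B}{C}$ by construction. The stationarity axiom (5) is the converse uniqueness statement: if both $A$ and $D$ are independent from $C$ over $B$ with $\cl(A\cup B)\cong\cl(D\cup B)$, then both $\cl(A\cup B\cup C)$ and $\cl(D\cup B\cup C)$ are isometric to the same canonical amalgam $\cl(A\cup B)\otimes_{\cl(B)}\cl(B\cup C)$ (using the given isometry over $\cl(B)$ to identify the first factors and the common $\cl(B\cup C)$ as the second factor), hence isometric to each other; here the key point is that the canonical amalgam is uniquely determined by its two factors and the amalgamating subspace, which is built into Definition~\ref{def:canonical amalgam}.

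The main obstacle, and the step I would spend the most care on, is the transitivity/monotonicity axiom (2), the only axiom with genuine content beyond bookkeeping. The forward direction requires showing that if $\cl(A\cup B\cup C\cup D)$ is the canonical amalgam over $\cl(B)$ of $\cl(A\cup B)$ and $\cl(B\cup C\cup D)$, then (i) the sub-amalgam over $\cl(B)$ recovers independence of $A$ from $C$, and (ii) $A$ is independent from $D$ over the larger base $\cl(BC)$. Step (ii) is the delicate one: I would need to verify that the gate of a point of $A$ onto $\cl(BCD)$ factors correctly through $\cl(BC)$, so that the distance formula witnessing $\indep{A}{B}{CD}$ restricts to the distance formula witnessing $\indep{A}{BC}{D}$. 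This requires a careful analysis of how gates compose across the nested closures $\cl(B)\subseteq\cl(BC)\subseteq\cl(BCD)$, invoking Lemma~\ref{lem:intersection} and the transitivity of $\leq_\d$ from Remark~\ref{rem:closed}; for the converse direction I would run the same gate computation in reverse, assembling the two independence statements into a single canonical-amalgam decomposition over $\cl(B)$ by checking the metric on the full closure matches the iterated amalgam. I expect the bulk of the real work to be in confirming that these gate projections behave associatively, after which both directions of (2) follow by comparing the resulting distance formulas.
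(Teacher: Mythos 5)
Your proposal is correct and takes essentially the same approach as the paper, whose entire proof is the single sentence that all conditions follow easily from the $\leq$-homogeneity and universality of $\H_\C$. Your elaboration — equivariance of closures and gates for (1), symmetry of the canonical amalgam for (3), embedding the amalgam by universality and moving the copy of $\cl(B\cup C)$ back by homogeneity for (4), uniqueness of the canonical amalgam for (5), and for (2) the associativity of the canonical amalgam via gate composition $g_{\cl(B)}(z)=g_{\cl(B)}\bigl(g_{\cl(B\cup C)}(z)\bigr)$ across nested closures — simply fills in the details the paper omits, and the gate-associativity step you flag as the crux does indeed go through.
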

\begin{proof}
All conditions follow easily from the $\leq$-homogeneity and universality of $\H_\C$.
\end{proof}

\section{The isometry group of $\H_\d$}\label{sec:isomgroup}

We now turn our attention to $\H_d$, the unique $\d$-hyperbolic space universal and $\leq_\d$-homogeneous for finite $\d$-hyperbolic spaces with distances in $\Q$.

Its isometry group $G=\Isom(\H_\d)$ is a Polish group, where a neighbourhood basis of the identy is given by point stabilizers of finite   $\d$-closed subsets of $\H_\d$. Since every element of $\H_\d$ is   $\d$-closed in $\H_\d$, it follows that $G$ acts transitively on $\H_\d$.

In the case of the Urysohn space $\mathbb{U}$ it is easy to see that the isometry group acts primitively: suppose there is an invariant equivalence relation $E$ with $E(x,y)$ for some $x,y\in\mathbb{U}$ and let $z\in\mathbb{U}$. Then by universality there is a sequence $(x_0=x,x_1=y,\ldots, x_n=z)$ for some $n\in\mathbb{N}$ such that $d(x_i,x_{i-1})=d(x_1,x_0), i=1,\ldots n$. By homogeneity and invariance of $E$ we see that $E(x,z)$ showing that the equivalence class is all of $\mathbb{U}$.

It is also easy to see that for any regular tree $T$ the isometry group respects a bipartition of the vertices and acts primitively on each class. Thus, a 2-coloring is the only invariant equivalence relation on $T$.
In contrast to this, we will now show that $\H_\d$ has non-trivial invariant equivalence relations, in fact uncountably many:

To show this, we introduce the following auxilliary graph structure on $\H_\d$. Let $R\subset\mathbb{Q}_{> \d}$. We define edges $E_R$ on $\H_\d$ in the following way: for $x,y\in\H_\d$ we put $E_R(x,y)$ if and only if $d(x,y)\in R$ and $\{x,y\}\leq \H_\d$.
Note that this edge relation is invariant under the isometry group of $\H_\d$.\footnote{The definition of the graph structure on $\H_\d$ uses only the metric. This is not an extension of a first-order language.}

\begin{lemma}\label{lem:tree}
For any $R\subset\mathbb{Q}_{> \d}$, the graph $(\H_\d,E_R)$ is a forest, i.e. the connected components of this graph are regular trees of infinite valencies and any connnected subgraph is   $\d$-closed in the sense of the metric on $\H_\d$.
\end{lemma}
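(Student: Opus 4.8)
The plan is to reduce the entire statement to one key fact: \emph{every path in $(\H_\d, E_R)$ is geodetic}, i.e. if $x_0,x_1,\dots,x_n$ satisfy $E_R(x_{i-1},x_i)$ for all $i$, then $d(x_0,x_n)=\sum_{i=1}^n d(x_{i-1},x_i)$. The only two features of $E_R$ I will use are that every edge has length $>\d$ (since $R\subseteq\Q_{>\d}$) and that the two-point set underlying an edge is $\d$-closed in $\H_\d$ by definition of $E_R$. Once paths are geodetic, the forest property, the tree structure of the components, and the $\d$-closedness of connected subgraphs will all follow.

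I would prove the key fact by induction on $n$, the cases $n\le 2$ being the crux of the base. For $n=2$, using only that $\{x_0,x_1\}$ and $\{x_1,x_2\}$ are gated (condition (1) of Definition~\ref{def:closed}), the gate $g_{\{x_0,x_1\}}(x_2)$ is $x_0$ or $x_1$; a short case analysis on the gate of $x_0$ in $\{x_1,x_2\}$ shows that $g_{\{x_0,x_1\}}(x_2)=x_0$ forces $d(x_0,x_2)=0$ or $d(x_1,x_2)=0$, both impossible, so the gate is $x_1$ and $(x_0,x_1,x_2)$ is geodetic. For $n\ge 3$ I may assume by induction that the tails $x_1,\dots,x_n$ and $x_2,\dots,x_n$ are geodetic, whence $d(x_1,x_n)=d(x_1,x_2)+d(x_2,x_n)$, and it suffices to show $g_{\{x_0,x_1\}}(x_n)=x_1$. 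Suppose instead the gate is $x_0$. Since $g_{\{x_0,x_1\}}(x_2)=x_1$ and $d(x_0,x_1)>\d$, the \emph{strong} condition (2) of Definition~\ref{def:closed} applies to $b=x_2$, $b'=x_n$ and gives $d(x_2,x_n)=d(x_2,x_1)+d(x_1,x_0)+d(x_0,x_n)$; combined with $d(x_1,x_n)=d(x_1,x_0)+d(x_0,x_n)$ (the assumed gate) this yields $d(x_2,x_n)=d(x_1,x_2)+d(x_1,x_n)$, contradicting $d(x_1,x_n)=d(x_1,x_2)+d(x_2,x_n)$. Hence the gate is $x_1$ and the whole path is geodetic. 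I expect this to be the main obstacle: the point is to extract \emph{exactness} from condition (2), rather than settling for the $2\d$-slack that bare $\d$-hyperbolicity would allow.

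Given that paths are geodetic, no simple cycle can exist: for $x_0,\dots,x_{m-1},x_0$ with $m\ge 3$, both $x_0,\dots,x_{m-1}$ and $x_1,\dots,x_{m-1},x_0$ are geodetic paths, and comparing the two resulting expressions forces $d(x_1,x_{m-1})=0$, which is impossible. Thus $(\H_\d,E_R)$ is a forest and its components are trees. To see that each vertex has infinitely many neighbours, I would fix $\rho\in R$ and, for each $k$, build the finite rational space $S_k$ (a ``star'') with centre $x$, leaves $y_1,\dots,y_k$, $d(x,y_i)=\rho$ and $d(y_i,y_j)=2\rho$; one checks directly that $S_k$ is $\d$-hyperbolic (indeed $0$-hyperbolic) and that each $\{x,y_i\}$ is $\d$-closed in $S_k$, since all other leaves gate to $x$ and condition (2) is vacuous. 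Embedding $S_k$ into $\H_\d$ as a $\d$-closed subspace by universality (Corollary~\ref{cor: finite hyperbolic class}) and using transitivity of $\d$-closedness (Remark~\ref{rem:closed}) yields $k$ distinct $E_R$-neighbours of a vertex; as $\Isom(\H_\d)$ acts transitively on $\H_\d$ the valency is constant, hence countably infinite, so the components are regular trees of infinite valency.

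Finally I would show that the vertex set $T$ of any connected subgraph is $\d$-closed. The key auxiliary observation, immediate from geodesy of paths together with edge-lengths $>\d$, is that \emph{any two distinct vertices of $T$ lie at distance $>\d$}. For finite $T$ I induct on the edges, adding one edge $\{u,v\}$ at a time with $u$ in the current subtree $T'$ and $v$ new: the hypothesis of Lemma~\ref{lem:strong union} holds with $A=T'$, $B=\{u,v\}$, $A\cap B=\{u\}$ exactly because every vertex of $T'\cup\{v\}$ other than $u$ is at distance $>\d$ from $u$, so $T'\cup\{u,v\}$ is $\d$-closed. For infinite $T=\bigcup_i T_i$ (an increasing union of finite subtrees, each $\d$-closed, with $T_i\leq_\d T_j$ by Corollary~\ref{cor:submodular}) I would show that for $b\notin T$ the gates $g_{T_i}(b)$ stabilise: whenever the gate moves to a new vertex the distance to $b$ drops by the distance between two distinct tree-vertices, hence by more than $\d$, so it can move only finitely often. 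The stable value is $g_T(b)$, giving gatedness, and condition (2) for $T$ then follows by applying condition (2) inside a single $T_i$ containing both stabilised gates.
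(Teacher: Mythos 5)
Your proposal is correct, and its load\mbox{-}bearing components are the same as the paper's: infinite valency via a finite star embedded $\d$-closedly by universality and spread around by transitivity, and $\d$-closedness of finite connected subgraphs via an edge-by-edge induction through Lemma~\ref{lem:strong union}. Where you genuinely differ is the organization around geodesy and acyclicity. You front-load a standalone key lemma --- every simple path in $(\H_\d,E_R)$ is geodetic --- proved by induction on length, using only gatedness for $n=2$ and extracting \emph{exactness} from condition (2) of Definition~\ref{def:closed} for $n\geq 3$. The paper never isolates this; it asserts geodesy of paths inside the same induction that proves finite subtrees $\d$-closed, and your explicit argument is essentially the justification that assertion needs (that the gate of the newly attached vertex $v$ in the old subtree is its neighbour $u$). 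For acyclicity you argue arithmetically from two geodetic traversals of a putative cycle, whereas the paper invokes convexity of gated sets (Remark~\ref{rem:geodetic}) to show the closing edge cannot be $\d$-closed; both work. For infinite connected subgraphs, your gate-stabilization along an exhaustion by finite subtrees replaces the paper's choice of a distance-minimizing vertex, and the two are equivalent in spirit. One shared caveat: your finiteness claim (``the distance drops by more than $\d$, so the gate moves only finitely often''), like the paper's ``let $x\in X$ be such that $d(a,x)$ is minimal'', is only justified for $\d>0$; if $\d=0$ and $R$ contains arbitrarily small rationals, neither argument as written excludes an infinite strictly descending sequence of gates, so that edge case would need a separate remark in either treatment.
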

\begin{proof}
The fact that every vertex in this graph has infinite valency follows directly from the universality of $\H_\d$: clearly,  the set $X=\{x,y_1,\ldots, y_n\}$ with $d(x,y_i)\in R$ and $ (y_i,x,y_j)$ geodetic for $ 1\leq i\neq j\leq n$ can be strongly embedded into $\H_\d$ for any $n\in\mathbb{N}$. Then $\{x,y_i\}\leq X\leq \H_\d$ for any $i\leq n$ and hence $E_R(x,y_i)$ holds. Since the isometry group acts transitively on the points of $\H_\d$, it follows that every point has infinite valency in this graph.

By Lemma~\ref{lem:strong union} it follows inductively, that any finite connected subgraph of $(\H_\d,E_R)$ that does not contain a cycle is   $\d$-closed in $\H_\d$: clearly this holds for any edge of the graph. So suppose inductively that $A$ is a finite subgraph without cycles, let $x\in A$ and extend $A$ by an edge $E_R(x,y)$. Then the conditions of Lemma~\ref{lem:strong union} are satisfied, and hence  $A\cup\{y\}\leq\H_\d$. Furthermore, if $A$ is a finite connected subgraph of $(\H_\d,E_r)$ without cycles, then for $x,y\in A$ the unique path in $A$ from $x$ to $y$ is geodetic, i.e. the distance between points in $A$ in the metric of $\H_\d$ is the graph distance in $A$ where an edge $E_R(x,y)$  has length $r$ if $d(x,y)=r\in R$.

Now suppose that $(x_0,\ldots, x_n=x_0)$ is a   $\d$-closed cycle in this graph with $n~>~2$. Then the subpath $X_0=\{x_1,x_2, x_{n-1}\}$ is geodetic and strong in $\H_\d$ by the previous paragraph. Thus, by  Remark~\ref{rem:geodetic}, $g_{X_0}(x_0)\in \{x_{n-1}, x_1\}$. However, if $x_1=g_{X_0}(x_0))$, then $(x_0,\ldots, x_{n-1})$ is geodetic. By  Remark~\ref{rem:geodetic}, the set $\{x_0, x_{n-1}\}$ is not   $\d$-closed in $\H_\d$ and so we cannot have $E_R(x_{n-1},x_0)$.  The symmetric argument applies if $x_{n-1}=g_{X_0}(x_0))$. Thus we cannot have both $E_R(x_0,x_1)$ and $E_R(x_{n-1},x_0)$, so there are no cycles.

To complete the proof, suppose that $X$ is a connected subgraph. Then any finite connected subgraph of $X$ is   $\d$-closed in $\H_\d$. Suppose $X$ is not   $\d$-closed. First assume that  $X$ is not gated. Then there exist $a\in\H_\d\setminus X$ for which there is no gate in  $X$. Let $x\in X$ be such that $d(a,x)$ is minimal. Then $x$ is unique: if $y\in X$ with $d(a,x)=d(a,y)$, then the convex closure of $\{x,y\}$ in $X$ is finite and not gated, contradicting the previous paragraph. Furthermore for any $y\in X$ we have $d(a,y)=d(a,x)+d(x,y)$ as this holds in the convex closure of  $\{x,y\}$ in $X$.
Now let $a,b\in \H_\d\setminus X$. Then $d(g_X(a),g_X(b))>\d$ and since the convex closure of $\{g_X(a),g_X(b)\}$ is   $\d$-closed, it follows that $d(a,b)$ is as required.

\end{proof}

\begin{proposition}\label{prop:trans not prim}
$\Isom(\H_\d)$ acts transitively, but not primitively on $\H_\d$. In fact, there exist uncountably many non-trivial equivalence relations on $\H_\d$ invariant under $\Isom(\H_\d)$.
\end{proposition}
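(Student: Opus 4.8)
The plan is to prove transitivity (already essentially observed, since single points are $\d$-closed and $\H_\d$ is $\leq$-homogeneous) and then to exhibit invariant equivalence relations witnessing non-primitivity. The natural source of such relations is the auxiliary forest structure $(\H_\d,E_R)$ from Lemma~\ref{lem:tree}, so first I would fix a single value $r\in\Q_{>\d}$ and set $R=\{r\}$. By Lemma~\ref{lem:tree} the graph $(\H_\d,E_{\{r\}})$ is a forest whose components are regular trees of infinite valency. I would then define $x\sim_r y$ if and only if $x$ and $y$ lie in the same connected component of $(\H_\d,E_{\{r\}})$. This is clearly an equivalence relation, and because the edge relation $E_R$ is defined purely in terms of the metric and the $\d$-closedness condition, it is invariant under $\Isom(\H_\d)$; hence $\sim_r$ is an invariant equivalence relation.

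Next I would check that $\sim_r$ is \emph{non-trivial}, i.e. neither the identity relation nor the full relation. It is not the identity because each vertex has infinite valency, so each component contains more than one point; indeed any edge $E_{\{r\}}(x,y)$ gives $x\sim_r y$ with $x\neq y$. It is not the full relation because the forest has more than one component: I would argue that there exist points at $E_R$-distance disallowing a connecting path, for instance two points $u,v$ with $d(u,v)$ not expressible as a sum of edge-lengths $r$ along a geodetic path, or simply two points in distinct trees produced by universality. Concretely, since the components are genuine trees (no cycles, by the cycle-exclusion argument in Lemma~\ref{lem:tree}), and since $\H_\d$ contains points at arbitrary rational distances, one can embed a configuration forcing $u\not\sim_r v$; transitivity of $\Isom(\H_\d)$ together with the existence of at least one non-edge then guarantees at least two components.

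To obtain \emph{uncountably many} pairwise distinct invariant equivalence relations, I would vary the parameter set. For each subset $S\subseteq\Q_{>\d}$ put $R=S$ and let $\sim_S$ be the relation of lying in the same component of $(\H_\d,E_S)$; each is invariant by the same argument. The main point is to show that distinct $S$ give distinct relations, or at least that an uncountable family among them is pairwise distinct. The cleanest route is to use single-value relations $\sim_{\{r\}}$ indexed by $r\in\Q_{>\d}$ and combine them: since there are only countably many rationals, I would instead index by subsets of a fixed infinite set of ``independent'' distances. I expect the main obstacle to be precisely this separation step — verifying that for an uncountable family of parameter sets the resulting component relations are genuinely different, which requires controlling when an edge of one value can be ``simulated'' by a path of other values in $\H_\d$. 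I would address this by choosing distances that are additively independent (e.g. distances $r_i$ such that no $r_j$ is a sum of others and each $r_i>\d$ prevents degeneration), so that an $E_{\{r_i\}}$-edge cannot be realized as a concatenation of $E_{\{r_j\}}$-edges for $j\neq i$; this guarantees that adding or removing $r_i$ from the parameter set changes the component structure, yielding $2^{\aleph_0}$ distinct invariant equivalence relations.
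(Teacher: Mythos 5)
Your proposal is correct and follows essentially the same route as the paper: transitivity from $\leq$-homogeneity, and non-primitivity via the $\Isom(\H_\d)$-invariant relations ``lying in the same connected component of $(\H_\d,E_R)$'' for $R\subseteq\Q_{>\d}$, relying on Lemma~\ref{lem:tree}. You are in fact more careful than the paper on the separation step (the paper only counts the subsets $R$); your additive-independence device works (e.g.\ take all chosen distances in $(\d,2\d)$ so that no sum of two or more of them can equal another), though it is not strictly needed: since components are geodetically embedded trees and a $\d$-closed pair $\{x,y\}$ is convex (Remark~\ref{rem:geodetic}), an $E_{\{r\}}$-edge with $r\in R\setminus R'$ can never be joined by any $E_{R'}$-path, so the relations $T_R$ are already pairwise distinct for all distinct $R$.
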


\begin{proof}
Transitivity of the action 
follows directly from the $\leq$-homogeneity and was already noted above.

To see that the action is not primitive, let  $R\subset\G_{>\d}$ and define the equivalence relation $T_R$ on $\H_\d$ where $T_R(x,y)$ if and only if $x$ and $y$ lie in the same $E_R$-connected component. Clearly this is invariant under the action of $\Isom(\H_\d)$. Since there are uncountably many subsets $R$, the claim follows.
\end{proof}

\begin{definition}\label{def:T_R}
For $R\subset\G$ we call the $E_R$ connected components of $\H_\d$ a $T_R$-tree.
\end{definition}

\begin{remark}\label{rem: trees are closed}
Note that each $T_R$-tree $X$ is geodetically embedded, i.e. if $(x_0,x_1,\ldots, x_n)$ is a path in a $T_R$-tree, then $(x_0,x_1,\ldots, x_n)$ is geodetic. Since every finite convex subgraph of $X$ is   $\d$-closed in $\H_\d$ by the proof of Lemma~\ref{lem:tree}, it follows in particular, that $X$ is   $\d$-closed in $\H_\d$.
\end{remark}

 Hence we have:

\begin{corollary}\label{cor:no bounded isom}
No nontrivial element of $\Aut(\H_\d)$ has bounded displacement.
\end{corollary}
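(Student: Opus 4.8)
The plan is to show that a nontrivial isometry $g$ of $\H_\d$ necessarily moves some points arbitrarily far, so that $\sup_{x}d(x,g(x))=\infty$. The whole argument takes place inside a single invariant forest $(\H_\d,E_R)$ for a well-chosen $R$. First I would fix a rational $r>\d$ and set $R=\{r\}$; by Lemma~\ref{lem:tree} and Remark~\ref{rem: trees are closed} each connected component (a $T_R$-tree) is a regular tree of infinite valency that is geodetically embedded and $\d$-closed in $\H_\d$, and in which each edge has length $r$, so that the induced metric is $r$ times the combinatorial tree metric; in particular each $T_R$-tree has infinite diameter. Since $E_R$ is invariant under $\Isom(\H_\d)$, the map $g$ permutes the set of $T_R$-trees.

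Now suppose $g\neq\mathrm{id}$ and choose $p$ with $g(p)\neq p$. Let $X$ be the $T_R$-tree containing $p$ and put $X'=g(X)$, the $T_R$-tree containing $g(p)$. I would split into two cases. If $X'=X$, then $g$ restricts to a nontrivial automorphism of the simplicial tree $X$ (it moves $p$). Here I would invoke the standard fact that on a tree the displacement function $x\mapsto d(x,g(x))$ equals $\ell+2\,d(x,\mathrm{Axis}(g))$ for a hyperbolic isometry of translation length $\ell>0$, and $2\,d(x,\mathrm{Fix}(g))$ for an elliptic one; since $X$ has infinite diameter and infinite valency, there are points of $X$ at arbitrarily large distance from the axis (respectively from the proper fixed subtree $\mathrm{Fix}(g)\neq X$), so the displacement is unbounded.

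If instead $X'\neq X$, then $X$ and $X'$ are disjoint $\d$-closed subsets of $\H_\d$, and by Lemma~\ref{lem: projections} the gate set $G_X(X')$ has diameter at most $\d$ and $d(X,X')$ is attained. For any $y\in X$, writing $z=g(y)\in X'$ and using that $X$ is $\d$-closed together with the gate identity $d(y,z)=d(y,g_X(z))+d(g_X(z),z)$ (where $g_X(z)\in G_X(X')$), I would estimate
\[
d(y,g(y))\ \geq\ d\bigl(y,G_X(X')\bigr)+d(X,X').
\]
Because $G_X(X')$ is bounded (diameter $\le\d$) while $X$ is an infinite-diameter tree, the quantity $d\bigl(y,G_X(X')\bigr)$ is unbounded as $y$ ranges over $X$, so again $g$ has unbounded displacement. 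In both cases this contradicts bounded displacement, whence the only isometry of bounded displacement is the identity.

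I expect the main obstacle to be the case $X'\neq X$: verifying the gate estimate requires checking that the nearest $X$-point of $z=g(y)$ indeed lies in the bounded gate set $G_X(X')$, so that the displacement grows with $d\bigl(y,G_X(X')\bigr)$; this is precisely where Lemma~\ref{lem: projections} (the bound on the diameter of the gate sets and the realization of $d(X,X')$) is used. The case $X'=X$ is comparatively routine modulo the classical classification of tree isometries and the observation that infinite valency forces points arbitrarily far from any axis or proper fixed subtree.
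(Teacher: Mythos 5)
Your proof is correct, but it takes a genuinely different route from the paper. The paper's proof is a commutator argument: since isometries of bounded displacement form a normal subgroup closed under commutators, one may replace $g$ by a suitable commutator $[g,h]$ which still has bounded displacement but \emph{fixes a point} $x$; since $E_R$ is invariant, such an element preserves the $T_R$-tree through $x$ and acts on it as a nontrivial tree isometry, which cannot have bounded displacement. You instead argue directly, splitting on whether $g$ preserves the $T_R$-tree $X$ containing a moved point: your Case 1 is exactly the tree fact the paper also relies on, while your Case 2 (where $g(X)\neq X$) is handled by the gate estimate $d(y,g(y))\geq d\bigl(y,G_X(g(X))\bigr)+d(X,g(X))$ via Lemma~\ref{lem: projections}, a configuration the paper never has to confront because the commutator trick forces a fixed point. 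The trade-off: the paper's proof is shorter but leaves the choice of the ``appropriate commutator'' implicit (one still has to produce $h$ so that $[g,h]$ fixes a point and is nontrivial on the relevant tree), whereas your argument is self-contained and makes all cases explicit, at the cost of invoking the classification of tree isometries --- where you should also account for edge inversions (isometries fixing no vertex but inverting an edge), whose displacement $2d(x,m)+$const from the inverted edge's midpoint $m$ is likewise unbounded, so your conclusion is unaffected.
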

\begin{proof}
If $g\in\Aut(\H_\d)$ has bounded displacement, then so does every commutator $[g,h]$ for $h\in\Aut(\H_\d)$. By taking an appropriate commutator we may assume that $g$ fixes a point $x$ and acts nontrivially on the $T_R$-tree containing $x$. But no nontrivial isometry of $T_R$ has bounded displacement, proving the corollary.
\end{proof}

\begin{corollary}\label{cor:no fixed point}
Suppose $\a\in\Aut(\H_\d)$ acts as a translation on some $T_R$-tree $X$ for some $R$.  Then $\a$ has no fixed point. In particular, any $\a\in\Aut(\H_\d)$ for which there exists $x\in\H_\d$ such that $\{x,\a(x)\}$ is   $\d$-closed and $d(x,\a(x))>\d$ is fixed point free.
\end{corollary}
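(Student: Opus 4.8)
The plan is to prove Corollary~\ref{cor:no fixed point} by reducing it to the behavior of $\a$ on a single $T_R$-tree, where the tree structure forbids fixed points for translations. First I would treat the main assertion: suppose $\a$ acts as a translation on some $T_R$-tree $X$. Since $X$ is a regular tree of infinite valency by Lemma~\ref{lem:tree}, and $\a$ restricts to an isometry of $X$ (because $\a$ preserves the isometry-invariant edge relation $E_R$, hence preserves $E_R$-connected components and carries $X$ to a $T_R$-tree), I would invoke the standard classification of tree isometries: an isometry of a tree is either elliptic (fixing a point or inverting an edge) or hyperbolic (a translation along an axis with positive translation length). By hypothesis $\a|_X$ is a translation, so it is hyperbolic and therefore fixes no point \emph{of} $X$. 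The remaining content is to promote ``no fixed point in $X$'' to ``no fixed point in $\H_\d$''; this follows because $X$ is $\d$-closed in $\H_\d$ (Remark~\ref{rem: trees are closed}) and hence $\a$-invariant, so any global fixed point $p$ would project via the gate map $g_X$ to an $\a$-fixed point of $X$ (the gate is uniquely determined by Remark~\ref{rem:unique gate}, and $\a$ commutes with the gate map because $\a(X)=X$ and $\a$ is an isometry), contradicting hyperbolicity on $X$.

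For the ``in particular'' clause, I would show that the hypothesis furnishes exactly the translation situation just handled. Suppose $x\in\H_\d$ satisfies that $\{x,\a(x)\}$ is $\d$-closed and $d(x,\a(x))=r>\d$. Choosing $R=\{r\}$ (or any $R\subset\Q_{>\d}$ containing $r$), the condition $E_R(x,\a(x))$ holds by definition of $E_R$, so $x$ and $\a(x)=\a(x)$ lie in the same $E_R$-connected component, i.e. the same $T_R$-tree $X$. Then $\a(x)\in X$, and since $\a$ preserves $E_R$ it maps the $T_R$-tree containing $x$ to the $T_R$-tree containing $\a(x)$, which is the same tree $X$; thus $\a|_X$ is a well-defined isometry of $X$ moving $x$ a distance $r>0$. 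The key point is that $\a|_X$ cannot be elliptic: an elliptic isometry of a tree either fixes a vertex or inverts an edge, and in either case the displacement of any vertex is even (in the edge-length metric) and the midpoint of the $\a$-invariant path is fixed; but here I would argue more directly that since $\a$ translates $x$ to an adjacent-in-$X$ vertex $\a(x)$ along a geodesic edge, iterating $\a$ produces a bi-infinite $E_R$-path $(\ldots,\a\inv(x),x,\a(x),\a^2(x),\ldots)$ which is geodetic by Remark~\ref{rem: trees are closed}, so $\a$ acts as a genuine translation with axis this bi-infinite path. Hence the first part applies and $\a$ is fixed point free.

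I expect the main obstacle to be the verification that $\a|_X$ is genuinely hyperbolic (a translation) rather than possibly elliptic or inverting, i.e. ruling out the degenerate cases of the tree-isometry dichotomy directly from the hypothesis $d(x,\a(x))>\d$. The clean way around this is to exhibit the bi-infinite axis explicitly: because consecutive iterates $\a^n(x)$ are $E_R$-adjacent and the whole orbit line is forced to be geodetic (any finite connected subgraph of a $T_R$-tree is $\d$-closed and geodetically embedded, by the proof of Lemma~\ref{lem:tree} and Remark~\ref{rem: trees are closed}), the orbit $\{\a^n(x):n\in\mathbb{Z}\}$ spans an $\a$-invariant line on which $\a$ shifts by one edge, which is precisely the defining behavior of a translation on a tree. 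Once the axis is produced, fixed-point-freeness is immediate, since a point fixed by a tree-translation would have to lie on the axis yet be shifted by the positive translation length, a contradiction; and a global fixed point in $\H_\d\setminus X$ is excluded by the gate argument of the first paragraph.
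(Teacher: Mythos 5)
Your treatment of the main assertion (the translation case) is correct and is in substance the paper's own proof: where the paper writes the displacement comparison $d(c,g)=d(\a(c),\a(g))<d(c,\a(g))$ via Remark~\ref{rem:unique gate}, you phrase the same step as equivariance of the gate map under $\a$; these are identical in content.

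The genuine gap is in the ``in particular'' clause, at the step where you claim that iterating $\a$ produces a bi-infinite $E_R$-path $(\ldots,\a\inv(x),x,\a(x),\a^2(x),\ldots)$ which is geodetic. Nothing in the hypotheses rules out $\a^2(x)=x$: since $\{x,\a(x)\}$ is a finite $\d$-closed subspace of $\H_\d$, the isometry swapping $x$ and $\a(x)$ extends, by homogeneity (Corollary~\ref{cor: finite hyperbolic class}), to an isometry $\a$ of $\H_\d$ satisfying exactly the hypotheses of the clause; for such an $\a$ the ``orbit'' is the backtracking walk $(\ldots,x,\a(x),x,\a(x),\ldots)$, which is not a path, so Remark~\ref{rem: trees are closed} does not apply, there is no axis, and $\a$ restricts to an edge inversion of $X$ rather than a translation, so the first part cannot be invoked. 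Your fallback remark---that an elliptic-or-edge-inverting isometry displaces every vertex by an even amount---is false for inversions (each endpoint of the inverted edge is displaced by exactly one edge length), so it does not exclude this case either. To be fair, the paper's own proof is equally terse here, asserting without justification that $\a$ ``acts on $X$ as a translation''; but the inversion case is real and must be addressed. The repair is short and needs no trees: if $\a(c)=c$ for some $c\in\H_\d$, then $d(c,x)=d(\a(c),\a(x))=d(c,\a(x))$, whereas gatedness of $\{x,\a(x)\}$ (condition (1) of Definition~\ref{def:closed}) forces $|d(c,x)-d(c,\a(x))|=d(x,\a(x))>0$ for every $c$, a contradiction. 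This disposes of the inversion case (indeed it proves the whole ``in particular'' clause directly), and together with your correct argument for genuine translations it completes the proof.
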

\begin{proof}
 Note that $X$ is   $\d$-closed in $\H_\d$. Let $c\in\H_\d\setminus X$ and $g=g_X(c)$. Then $d(c,g)=d(\a(c),\a(g))<d(c,\a(g))$ by Remark \ref{rem:unique gate} and hence $c\neq\a(c)$.

If $\{x,\a(x)\}$ is   $\d$-closed and $d=d(x,\a(x))>\d$, then $\a$ leaves the $T_R$-tree $X$  containing $x$ for $R=\{d\}$ invariant and acts on $X$ as a translation. So the result from the first part.
\end{proof}

\begin{proposition}\label{prop:not dense}
$G=\Aut(\H_\d)$ has no dense conjugacy class.
For any finite set $B$ the pointwise stabilizer $G_{\cl(B)}$ in $G$ has a dense conjugacy class.
\end{proposition}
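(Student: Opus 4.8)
The proposition has two parts, so I would organize the proof around them. For the first claim, that $G = \Aut(\H_\d)$ has no dense conjugacy class, the plan is to exhibit a conjugacy-invariant property that some isometries satisfy and others fail, thereby separating at least two conjugacy classes by an open condition. The natural invariant to use is the behavior established in Corollary~\ref{cor:no fixed point}: an element $\a$ for which there is $x$ with $\{x,\a(x)\}$ being $\d$-closed and $d(x,\a(x))>\d$ is fixed-point-free. The idea is that "having a fixed point" is, roughly, a closed (or at least non-generic) condition, while "moving some $\d$-closed pair a distance $>\d$" is an open condition defining a nonempty conjugation-invariant set that omits the identity and indeed any isometry with a large fixed region.

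Concretely, I would argue as follows. The set $U$ of isometries $g$ such that there exists $x$ with $\{x,g(x)\}\leq_\d\H_\d$ and $d(x,g(x))>\d$ is a union of basic open sets (each condition $g(x)=y$ for a specified $\d$-closed pair is open in the Polish topology), hence open, and it is conjugation-invariant since the defining property is preserved under replacing $g$ by $hgh\inv$ (apply $h$ to the witnessing point). By universality and $\leq$-homogeneity $U$ is nonempty: one can build an isometry translating along some $T_R$-tree. Now the complement contains, for instance, elements fixing a large finite $\d$-closed set pointwise, and more to the point, any dense conjugacy class would have to meet both $U$ and a suitable disjoint open set. The cleanest route is to produce a \emph{second} nonempty open conjugation-invariant set $V$ disjoint from $U$: for example, isometries fixing pointwise some nontrivial $\d$-closed pair $\{x,y\}$ with $d(x,y)>\d$ form an open set whose members cannot lie in $U$ simultaneously under the same witnessing data, and a single conjugacy class cannot be dense in a space that splits into two disjoint nonempty invariant open pieces.

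For the second claim, that the pointwise stabilizer $G_{\cl(B)}$ has a dense conjugacy class, the plan is the standard back-and-forth / weak-amalgamation argument for existence of a comeager (hence in particular dense) conjugacy class, now relativized to the stabilizer. Since $G$ acts $\leq$-homogeneously and $\H_\d$ carries the local stationary independence relation of Proposition~\ref{prop:SIR}, the group $G_{\cl(B)}$ acts on the "residue" $\H_\d$ over the fixed set $\cl(B)$ in a way that is again universal and $\leq$-homogeneous for finite $\d$-closed configurations disjoint from (or independent over) $\cl(B)$. The presence of a stationary independence relation is exactly the hypothesis used in the work on isometry groups of Urysohn-type spaces (cf.\ \cite{TZ}) to produce a dense — indeed generic — conjugacy class via a joint-embedding/amalgamation argument for partial isometries. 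I would invoke that machinery: one checks that the class of finite partial automorphisms fixing $\cl(B)$ has the joint embedding and weak amalgamation properties, using stationarity (condition (5) of Proposition~\ref{prop:SIR}) to glue two finite partial isometries into a common extension, and then the Baire-category argument yields a dense (generic) conjugacy class in $G_{\cl(B)}$.

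The main obstacle, and the step deserving the most care, is verifying the amalgamation of partial isometries over the fixed set $\cl(B)$ in the second part. One must ensure that when two finite $\leq_\d$-configurations extending $B$ are amalgamated, the gate structure and the second clause of Definition~\ref{def:closed} are respected so that the result is genuinely $\d$-hyperbolic and $\d$-closed — this is precisely where Lemma~\ref{lem:amalgamation} and the stationarity condition must be combined, and where the geometry (as opposed to the soft model-theoretic formalism) enters. The first part is comparatively easy once the right invariant open sets are identified; the subtlety there is only in confirming that the witnessing conditions $\{x,g(x)\}\leq_\d\H_\d$ are open, which follows because $\d$-closedness of a finite set is determined by finitely many distance equalities and hence is an open condition on $g$.
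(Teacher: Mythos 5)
Your proposal is correct, but your proof of the first claim takes a genuinely different route from the paper's. The paper refutes the criterion of \cite{TZ}, Lemma~2.8 directly: it places a path $(x_0,x_1,x_2)$ in a $T_{2\d}$-tree, sets $x=(x_0,x_1)$, $y=(x_1,x_2)$ and $a=b$ an arbitrary point, and observes that any $g\in G$ witnessing the criterion for these tuples would fix $a$ while acting as a translation on a $T_{2\d}$-tree, contradicting Corollary~\ref{cor:no fixed point}. You instead separate conjugacy classes topologically: the set $U$ of all $g$ moving some $x$ with $\{x,g(x)\}\leq_\d\H_\d$ and $d(x,g(x))>\d$, and the set $V$ of all $g$ fixing a point, are nonempty, open and conjugation-invariant, and Corollary~\ref{cor:no fixed point} makes them disjoint; a conjugacy class meeting an invariant set is contained in it, so no conjugacy class can meet both, hence none is dense. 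Both arguments pivot on Corollary~\ref{cor:no fixed point}; yours is more self-contained in that it avoids the TZ criterion for the negative half, while the paper's has the economy of running both halves of the proposition through the same Lemma~2.8. Two details in your write-up should be repaired, though neither is a gap. Disjointness of $U$ and $V$ has nothing to do with ``the same witnessing data'': Corollary~\ref{cor:no fixed point} makes every element of $U$ fixed-point free outright, so $U\cap V=\emptyset$ holds globally, which is what your final contradiction actually requires. And openness of $U$ does not rest on $\d$-closedness being an ``open condition on $g$'': for each fixed pair $x,y$ with $\{x,y\}\leq_\d\H_\d$ and $d(x,y)>\d$, the set $\{g\in G\colon g(x)=y\}$ is open, and $U$ is the union of these sets over all such pairs --- $\d$-closedness is a property of the pair inside $\H_\d$, not of $g$ (and it is preserved by isometries, which gives the conjugation-invariance). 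Your treatment of the second claim coincides with the paper's, which simply cites \cite{TZ}, Lemma~2.8 together with the local stationary independence relation of Proposition~\ref{prop:SIR}; your observation that locality (the requirement $B\neq\emptyset$ in Definition~\ref{def:SIR}) is precisely what makes the machinery available over $\cl(B)$ but not over the empty base is the right explanation of why the two halves of the proposition are compatible.
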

\begin{proof}
We use the criterion from  \cite{TZ}, Lemma 2.8: $G$ has a dense conjugacy class if and only if for any finite tuples $x, y, a, b$ such that $\cl(x)\cong\cl(y)$  and $\cl(a)\cong\cl(b)$ there are tuples $x',y'$ such that $\cl(x'y')\cong\cl(xy)$ and $\cl(x'a)\cong\cl(y'b)$. 

To see that this critrion is violated, let $(x_0,x_1,x_2)$ be a path in a $T_{2\d}$-tree and put $x=\cl(x)=(x_0,x_1), y=\cl(y)=(x_1,x_2)$ and let $a=b\in\H_\d$ be an arbitrary point.
Suppose there are tuples $x',y'$ such that $\cl(x'y')\cong\cl(xy)$ and $\cl(x'a)\cong\cl(y'a)$. Then $x'=(x_0',x_1'), y'=(x_1',x_2')$ for a path
  $(x_0',x_1',x_2')$ in some $T_{2\d}$-tree and by $\leq_\d$-homogeneity there is some $g\in G$ such that $g(x'a)=(y'a)$. Thus, $g$ acts as a translation on the $T_2\d$-connected component containing $x_1'$ and hence $g$ has no fixed point by Corollary~\ref{cor:no fixed point}, a contradiction. 
  
  This shows that $G$ does not have a dense conjugacy class. The fact that  $G_{\cl(B)}$  has a dense conjugacy class for any finite subset $B$ follows from \cite{TZ}, Lemma 2.8 using the fact that $\H_\d$ has a local stationary independence relation.

\end{proof}

\section{Final remarks and open questions}

In the previous sections we tried to point out similarities and dissimilarities between the universal $\d$-hyperbolic spaces constructed here and the Urysohn space on the one hand and regular trees on the other hand.

In light of the fact that the isometry group of a regular tree and the isometry group of the Urysohn space modulo the normal subgroup of bounded  displacement are simple groups, it is natural to ask:
\begin{itemize}
\item Is the isometry group of $\H_\d$ topologically simple, or even abstractly simple?
\end{itemize}
 
Since $\H_\d$ has no isometries of bounded displacement, it seems likely that the answer is `yes'. However, the methods from \cite{TZ} do not apply here directly since $\Isom(\H_\d)$ has no dense conjugacy class. In the case of trees and more generally right-angled buildings, the automorphism groups also miss a dense conjugacy class. However, the methods from \cite{TZ} can be applied to stabilizers of maximal flags because these stabilizers have dense conjugacy classes,  and the same was shown above  for~$\H_\d$.

For right-angled buildings one can then deduce abstract simplicity of the automorphism group using the fact that stabilizers of partial flags form a sequence of subgroups maximal in each other. However, as pointed out above, the isometry group of $\H_\d$ does not act primitively. Hence point stabilizers are not maximal in this setting and it is harder to approach the simplicity problem.

Other natural questions address the model theoretic properties of these hyperbolic spaces:

\begin{itemize}

\item Is there a natural class of $\d$-hyperbolic spaces $\C$, $\d>0$, such that the limit structure is stable?
\end{itemize}

For $0$-hyperbolic spaces (and right-angled buildings) it was shown in \cite{pseudospace} that the theory is $\omega$-stable. This can be shown using the notion of independence defined above and verifying that it satisfies the necessary conditions for stability.
In general, one cannot expect arbitrary $\d$-hyperbolic spaces to be stable. It would be interesting to find a natural class of hyperbolic spaces with stable theory.

Note that an infinite tree of bounded height is $\omega$-categorical. Hence the following questions are natural given the existence of a bounded Urysohn space and $0$-hyperbolic spaces of bounded diameter:

\begin{itemize}
\item Is there a $\d$-hyperbolic analog of a bounded Urysohn space for $\d> 0$?

\item Is there a natural class of $\d$-hyperbolic spaces $\C$, for $\d> 0$, such that the limit structure is $\omega$-categorical?
\end{itemize}

\section{Acknowledgements}

The author wishes to thank the referee for constructive comments and Peter Kramer for providing the diagram for Theorem~\ref{thm: no universal-homogeneous dh}.
 
This research was partially funded through the Cluster of Excellence by the
German Research Foundation (DFG) under Germany’s Excellence Strategy EXC
2044–390685587, Mathematics M\"unster: Dynamics–Geometry–Structure and by
CRC 1442 Geometry: Deformations and Rigidity.

\end{document}